\dedicatory{Dedicated to Professor Vesselin Drensky on his 75th birthday.}
\title{Polynomial identities of finite prime Universal algebras}
\author[Y.~Bahturin et al]{Yuri Bahturin}
\thanks{Y.~Bahturin is supported by NSERC Discovery grant \#227060-19}
\address{Department of Mathematics and Statistics, Memorial University of Newfoundland, St. John's, NL, A1C5S7, Canada.}
\email{bahturin@mun.ca}
\author[]{Daniela Martinez Correa}
\thanks{D.~Correa is supported by Fapesp, grant no.~2024/01338-0 and 2025/11605-8.}
\address{Department of Mathematics, Instituto de Matem\'atica e Estat\'istica, Universidade de S\~ao Paulo, SP, Brazil}
\email{danielam.correa@ime.usp.br}
\author[]{Diogo Diniz}
\thanks{D.~Diniz was partially supported by Conselho Nacional de Desenvolvimento Cient\'ifico e Tecnol\'ogico (CNPq) grant No.~304328/2022-7.}
\address{Unidade Acadêmica de Matemática, Universidade Federal de Campina Grande, Campina Grande, PB, 58429-970, Brazil}
\email{diogo@mat.ufcg.edu.br}
\author[]{Felipe Yasumura}
\thanks{F.~Yasumura is supported by Fapesp grant 2024/14914-9.}
\address{Department of Mathematics, Instituto de Matem\'atica, Estat\'istica e Ci\^encia da Computa\c c\~ao, Universidade de S\~ao Paulo, SP, Brazil}
\email{fyyasumura@ime.usp.br}
\newtheorem{Thm}{Theorem}
\newtheorem{lemma}[Thm]{Lemma}
\newtheorem{proposition}[Thm]{Proposition}
\newtheorem{cor}[Thm]{Corollary}
\theoremstyle{definition}
\newtheorem{definition}[Thm]{Definition}
\newtheorem{example}{Example}
\subjclass[2020]{17A42, 16R99}
\keywords{Polynomial identities, Universal algebras, Prime algebras}
\begin{document}
\begin{abstract}
We prove that two finite prime $\Omega$-algebras defined over the same unital commutative ring and satisfying the same set of polynomial identities are isomorphic.
\end{abstract}
\maketitle

\section{Introduction}
An interesting question in PI-theory is whether an algebra can be determined by the set of polynomial identities it satisfies. 
In general, the answer is negative. 
However, under certain conditions, positive results are known; see, for instance, \cite{AH, BD2019, KZ, KR, Neher, SZ}. 
The strongest result in this direction is \cite[Section \S5]{Raz}, where the author proves that finite-dimensional prime \(\Omega\)-algebras over an algebraically closed field are uniquely determined by their polynomial identities 
(see also \cite{BY, BY2} for the graded version). 
It is worth mentioning that, even when not prime, gradings on the algebra of upper triangular matrices are determined by their graded polynomial identities 
(see \cite{VinKoVa2004,GR2020}).

In the context of certain finite simple universal algebras, several isomorphism results have been obtained; see, for instance, \cite{FM,J,HM}. The techniques we use here are more appropriate in the context of linear $\Omega$-algebras and our main isomorphism result deals with more general prime algebras.

%This paper focuses on the polynomial identities satisfied by finite algebras. 
The finite basis property, one of the main problems in PI-theory, has been positively solved in several varieties defined by finite algebras and rings. 
For example, L'vov proved, among other relevant results, that finite associative and finite alternative algebras have a finite basis of identities \cite{Lvov, Lvov2}, and the associative case was also treated independently by Kruse \cite{Kruse}; 
Bahturin and Olshanskii solved the problem for finite Lie rings \cite{BO}; 
Medvedev obtained results for finite Jordan algebras \cite{Medvedev1} and for a broad class of varieties \cite{Medvedev2}. 
For finite groups, the finite basis property was proved by Oates-Williams and Powell in 1964 \cite{OaPo} (see \cite[Chapter 5]{Hanna} as well). 
It is worth noting that, in general, the Specht property does not hold for arbitrary finite algebras (see, for instance, \cite{Lvov3, Polin}).

In this paper, we prove that finite prime \(\Omega\)-algebras are uniquely determined by their polynomial identities. 
Moreover, if \(\mathcal{A} \in \mathrm{var}(\mathcal{A}')\) and \(\mathcal{A}\) is finite and prime, then \(\mathcal{A}\) is section of \(\mathcal{A}'\). The techniques used here are inspired by those in group theory (see the theory compiled in \cite[Chapter 5]{Hanna}; see also \cite[Chapter 7]{bahturinbook} and \cite{BO}).

\section{Preliminaries}
We let $\mathbf{k}$ be a unital commutative ring. Let $\Omega=\bigcup_{n\ge0}\Omega_n$. An $\Omega$-algebra is a $\mathbf{k}$-module $\mathcal{A}$ where each $\omega\in\Omega_n$ defines a $\mathbf{k}$-multilinear map
$$
\omega:\underbrace{\mathcal{A}\times\cdots\times\mathcal{A}}_\text{$n$ times}\to\mathcal{A}.
$$
The map defined by each element of $\Omega_0$ is a choice of a distinguished element of $\mathcal{A}$. An ideal is a $\mathbf{k}$-submodule $I\subseteq\mathcal{A}$ such that $\omega(a_1,\ldots,a_n)\in I$, for any $\omega\in\Omega_n$ ($n>0$), and $a_1$, \dots, $a_n\in\mathcal{A}$, where $\{a_1,\ldots,a_n\}\cap I\ne\emptyset$. A homomorphism between two $\Omega$-algebras $\mathcal{A}$ and $\mathcal{B}$ is a $\mathbf{k}$-linear map $f:\mathcal{A}\to\mathcal{B}$ such that $f\omega(a_1,\ldots,a_n)=\omega(f(a_1),\ldots,f(a_n))$, for each $\omega\in\Omega_n$ and $a_1$, \dots, $a_n\in\mathcal{A}$.

The free $\Omega$-algebra, denoted by $\mathbf{k}_\Omega\langle X\rangle$ is constructed as follows. First, we inductively construct the set of ``monomials'' $M_\Omega(X)$. It contains $X$ and $\Omega_0$. Given $\omega\in\Omega_n$ and $y_1$, \dots, $y_n\in M_\Omega(X)$, then $\omega(y_1,\ldots,y_n)\in M_\Omega(X)$. So, $\mathbf{k}_\Omega\langle X\rangle$ is the free $\mathbf{k}$-module with basis $M_\Omega(X)$. It has a natural structure of $\Omega$-algebra, and it is called  the \emph{free $\Omega$-algebra} over $\mathbf{k}$, freely generated by $X$. We have the usual notion of degree and multilinear polynomials. From now on, we let $X=\{x_1,x_2,\ldots\}$ be an infinite set of variables indexed by $\mathbb{N}$.%; and for each $n\in\mathbb{N}$, we let $X_n=\{x_1,\ldots,x_n\}$.

Now, let $\mathcal{S}_1$, \dots, $\mathcal{S}_m\subseteq\mathcal{A}$ be nonempty subsets of $\mathcal{A}$, where $m\ge2$. We define
\begin{align*}
\mathcal{S}_1\cdot\mathcal{S}_2\cdot\ldots\cdot\mathcal{S}_m=\mathrm{Span}_\mathbf{k}\{&f(s_1,\ldots,s_m,a_1,\ldots,a_t)\mid
f=f(x_1,\ldots,x_{m+t})\in \mathbf{k}_\Omega\langle X\rangle\\%
&\text{ is multilinear}, s_1\in\mathcal{S}_1,\dots,s_m\in\mathcal{S}_m, a_1,\ldots,a_t\in\mathcal{A}\}.
\end{align*}
It is worth noting that $\mathcal{S}_1\cdot\ldots\cdot\mathcal{S}_m$ is always an ideal. For each $k\in\mathbb{N}$, we define $\mathcal{S}^k:=\underbrace{\mathcal{S}\cdot\ldots\cdot\mathcal{S}}_\text{$k$ times}$. Note that, $\mathcal{S}_1\cdot(\mathcal{S}_2\cdot\mathcal{S}_2)$ does not need to coincide with $\mathcal{S}_1\cdot\mathcal{S}_2\cdot\mathcal{S}_3$. However, it is easy to check that $\mathcal{S}_1\cdot(\mathcal{S}_2\cdot\mathcal{S}_2)\subseteq\mathcal{S}_1\cdot\mathcal{S}_2\cdot\mathcal{S}_3$. A similar consideration can be done for arbitrary disposition of parenthesis on a greater number of product of subsets. In particular, we have
$$
\mathcal{A}\cdot\mathcal{A}^k\subseteq\mathcal{A}^{k+1},
$$
for each $k\in\mathbb{N}$.

Recall that an $\Omega$-algebra $\mathcal{A}$ is said to be:
\begin{enumerate}
    \item \emph{semiprime} if, for every nonzero ideal $I \subseteq \mathcal{A}$, we have $I^2 \neq 0$;
    \item \emph{prime} if, for every pair of nonzero ideals $I, J \subseteq \mathcal{A}$, we have $I \cdot J \neq 0$;
    \item \emph{simple} if $\mathcal{A}^2 \neq 0$ and the only ideals of $\mathcal{A}$ are $0$ and $\mathcal{A}$.
\end{enumerate}

A polynomial \(f=f(x_1,\ldots,x_m)\in \mathbf{k}_\Omega\langle X\rangle\) is a polynomial identity 
for an \(\Omega\)-algebra \(\mathcal{A}\) if \(f(a_1,\ldots,a_m)=0\) for all 
\(a_1,\ldots,a_m\in\mathcal{A}\). A PI-algebra is an \(\Omega\)-algebra \(\mathcal{A}\) that satisfies a multilinear polynomial identity where at least one coefficient is $1$. This is always the case when \(\mathcal{A}\) is finite. 
We denote by \(\mathrm{Id}(\mathcal{A})\) the set of all polynomial identities of \(\mathcal{A}\).

A variety of \(\Omega\)-algebras over \(\mathbf{k}\) is a class \(\mathscr{V}\) of \(\Omega\)-algebras over \(\mathbf{k}\) 
defined by a set of polynomial identities. 
A relatively free algebra in \(\mathscr{V}\), or a \(\mathscr{V}\)-free algebra, is a free algebra 
within the class \(\mathscr{V}\). 
Given a PI-algebra \(\mathcal{A}\), we denote by \(\mathrm{var}(\mathcal{A})\) the variety 
of \(\Omega\)-algebras over \(\mathbf{k}\) determined by \(\mathrm{Id}(\mathcal{A})\).

Let \(\mathcal{A}\) be a PI-algebra. We shall construct a model for the relatively free 
algebra in \(\mathrm{var}(\mathcal{A})\). 
Let \(Z\) be any set, and let \(\mathcal{A}^{(Z)}\) denote the set of all maps 
\(Z\to\mathcal{A}\) with finite support. 
Set \(\mathcal{A}^\ast := \mathcal{A}^{\mathcal{A}^{(Z)}}\). 
For each \(z\in Z\), define \(h_z \in \mathcal{A}^\ast\) by
\[
h_z(f) = f(z), \qquad f\in\mathcal{A}^{(Z)}.
\]
Let 
\[
\mathcal{R}_Z := \mathrm{alg}\{\,h_z \mid z\in Z\,\} \subseteq \mathcal{A}^\ast
\]
be the subalgebra of \(\mathcal{A}^\ast\) generated by all elements \(h_z\), \(z\in Z\).

\begin{proposition}\label{free_alg}
The algebra \(\mathcal{R}_Z\) is relatively free in \(\mathrm{var}(\mathcal{A})\), freely generated by \(Z\).
\end{proposition}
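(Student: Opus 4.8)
The plan is to realize $\mathcal{R}_Z$ as a quotient of the absolutely free $\Omega$-algebra on $Z$ by the set of polynomial identities of $\mathcal{A}$, and then to invoke the standard characterization of relatively free algebras; throughout we identify $Z$ with its image $\{h_z\mid z\in Z\}\subseteq\mathcal{R}_Z$ via $z\mapsto h_z$. The first ingredient is that $\mathcal{R}_Z\in\mathrm{var}(\mathcal{A})$: a variety is closed under arbitrary direct products and under $\Omega$-subalgebras — an identity holding in each factor holds coordinatewise in a product, and passing to a subalgebra only shrinks the set of admissible substitutions — and since the operations on $\mathcal{A}^\ast=\mathcal{A}^{\mathcal{A}^{(Z)}}$ are defined pointwise, $\mathcal{A}^\ast$ is a direct power of $\mathcal{A}$. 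Hence $\mathcal{A}^\ast\in\mathrm{var}(\mathcal{A})$ and therefore $\mathcal{R}_Z\in\mathrm{var}(\mathcal{A})$.

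The key step is the kernel computation. Let $F=\mathbf{k}_\Omega\langle Z\rangle$ be the free $\Omega$-algebra on $Z$ (built as in Section~2) and let $\pi\colon F\to\mathcal{R}_Z$ be the $\Omega$-homomorphism with $\pi(z)=h_z$; it is surjective since the $h_z$ generate $\mathcal{R}_Z$. For each $f\in\mathcal{A}^{(Z)}$ the evaluation map $\mathrm{ev}_f\colon\mathcal{A}^\ast\to\mathcal{A}$, $g\mapsto g(f)$, is an $\Omega$-homomorphism (again because operations in $\mathcal{A}^\ast$ are pointwise) with $\mathrm{ev}_f(h_z)=f(z)$. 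Consequently, for a polynomial $p=p(z_1,\dots,z_n)\in F$ with distinct $z_1,\dots,z_n\in Z$,
\[
\pi(p)(f)=\mathrm{ev}_f\!\bigl(p(h_{z_1},\dots,h_{z_n})\bigr)=p\bigl(f(z_1),\dots,f(z_n)\bigr).
\]
As $f$ runs over all finitely supported maps $Z\to\mathcal{A}$, the tuple $\bigl(f(z_1),\dots,f(z_n)\bigr)$ runs over all of $\mathcal{A}^n$, so $\pi(p)=0$ if and only if $p$ vanishes under every substitution in $\mathcal{A}$; that is, $\ker\pi=\mathrm{Id}_Z(\mathcal{A})$, the set of polynomial identities of $\mathcal{A}$ in the variables $Z$. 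Thus $\mathcal{R}_Z\cong F/\mathrm{Id}_Z(\mathcal{A})$, with $h_z$ corresponding to the class of $z$.

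It then remains to verify the universal property: that $F/\mathrm{Id}_Z(\mathcal{A})$ is free in $\mathrm{var}(\mathcal{A})$ on the set of classes of $Z$. Given $\mathcal{B}\in\mathrm{var}(\mathcal{A})$ and a map $g\colon Z\to\mathcal{B}$, freeness of $F$ yields a homomorphism $\tilde g\colon F\to\mathcal{B}$ extending $g$; since $\mathcal{B}$ satisfies every identity of $\mathcal{A}$ we have $\mathrm{Id}_Z(\mathcal{A})\subseteq\ker\tilde g$, so $\tilde g$ factors through $F/\mathrm{Id}_Z(\mathcal{A})$, and the factorization is unique because the classes of $Z$ generate. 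Combined with the first paragraph, this shows $\mathcal{R}_Z$ is relatively free in $\mathrm{var}(\mathcal{A})$, freely generated by $Z$.

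I do not expect a genuine obstacle here: the argument is the $\Omega$-algebra analogue of Birkhoff's construction of relatively free algebras as subalgebras of direct powers. The only point requiring care is the identification $\ker\pi=\mathrm{Id}_Z(\mathcal{A})$, which rests on two observations — that the pointwise operations make every $\mathrm{ev}_f$ an $\Omega$-homomorphism, and that, since any single polynomial involves only finitely many variables, \emph{finitely supported} assignments already realize every tuple of arguments from $\mathcal{A}$; this is exactly why using $\mathcal{A}^{(Z)}$ rather than the full power $\mathcal{A}^Z$ as the index set causes no loss.
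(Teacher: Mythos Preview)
Your proof is correct and is precisely the standard Birkhoff-type argument the paper invokes by citing \cite[Theorem 15.4]{Hanna}; the paper gives no independent proof beyond that reference. Your explicit treatment of the kernel computation---in particular the remark that finitely supported assignments suffice because any single polynomial involves only finitely many variables---fills in exactly the details one would extract from adapting the group-theoretic argument to linear $\Omega$-algebras.
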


\begin{proof}
The proof follows the same argument as in \cite[Theorem 15.4]{Hanna}.
\end{proof}

As a consequence, if \(\mathcal{A}\) is finite, then \(\mathcal{R}_Z\) is finite whenever 
\(Z\) is finite. In particular, \(\mathrm{var}(\mathcal{A})\) is locally finite.

\section{Main result}
We let \(\mathcal{A}\) be an \(\Omega\)-algebra over an arbitrary unital commutative ring \(\mathbf{k}\), 
and assume that \(\Omega_n \ne \emptyset\) for at least one \(n > 1\).

\begin{definition}
A \emph{section} of \(\mathcal{A}\) is an algebra of the form 
\(\mathcal{R}/\mathcal{I}\), where \(\mathcal{R} \subseteq \mathcal{A}\) is a subalgebra 
and \(\mathcal{I} \subseteq \mathcal{R}\) is an ideal of \(\mathcal{R}\).
\end{definition}

\begin{definition}
Let \(\mathcal{A}\) be an \(\Omega\)-algebra. Its \emph{monolith}, denoted \(\mathrm{M}(\mathcal{A})\), 
is the intersection of all its nonzero ideals. 
We say that \(\mathcal{A}\) is \emph{monolithic} if \(\mathrm{M}(\mathcal{A}) \ne 0\).
\end{definition}

\begin{lemma}\label{prime_monolith}
Let \(\mathcal{R}\) be a finite \(\Omega\)-algebra. 
Then \(\mathrm{M}(\mathcal{R})^2 \ne 0\) if and only if \(\mathcal{R}\) is prime.
\end{lemma}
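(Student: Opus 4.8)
The plan is to prove the two implications separately, using two elementary observations. First, \emph{monotonicity} of the subset product: if $\mathcal{S}_1\supseteq\mathcal{T}_1$ and $\mathcal{S}_2\supseteq\mathcal{T}_2$, then $\mathcal{T}_1\cdot\mathcal{T}_2\subseteq\mathcal{S}_1\cdot\mathcal{S}_2$, which is immediate from the definition since the spanning set for the smaller product is contained in the spanning set for the larger one. Second, for any ideals $\mathcal{I},\mathcal{J}$ of $\mathcal{R}$ one has $\mathcal{I}\cdot\mathcal{J}\subseteq\mathcal{I}\cap\mathcal{J}$: evaluating a multilinear monomial in which one argument lies in $\mathcal{I}$, the innermost operation containing that argument already outputs an element of $\mathcal{I}$, and pushing the ideal property up the monomial shows the whole value lies in $\mathcal{I}$, and similarly in $\mathcal{J}$. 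I also use that, since $\mathcal{R}$ is finite, the (finite, nonempty) collection of nonzero ideals of $\mathcal{R}$ has minimal elements, so every nonzero ideal of $\mathcal{R}$ contains a minimal nonzero ideal. Throughout I assume $\mathcal{R}\ne0$ (the zero algebra is degenerate and not regarded as prime, and its monolith is trivial), so that minimal nonzero ideals exist.

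For the implication ``$\mathcal{R}$ prime $\Rightarrow\mathrm{M}(\mathcal{R})^2\ne0$'', I first show that a finite prime $\mathcal{R}$ is monolithic. If $M_1\ne M_2$ were two distinct minimal ideals, then $M_1\cap M_2$ is an ideal properly contained in each, so $M_1\cap M_2=0$; by the second observation $M_1\cdot M_2\subseteq M_1\cap M_2=0$, contradicting primeness. Hence $\mathcal{R}$ has a unique minimal ideal $M$. Every nonzero ideal of $\mathcal{R}$ contains a minimal ideal, which must be $M$; therefore $M$ is contained in every nonzero ideal, and being itself a nonzero ideal it equals their intersection, i.e.\ $\mathrm{M}(\mathcal{R})=M\ne0$. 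Finally $\mathrm{M}(\mathcal{R})^2=M\cdot M\ne0$, because $M$ is a nonzero ideal and $\mathcal{R}$ is prime.

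For the converse ``$\mathrm{M}(\mathcal{R})^2\ne0\Rightarrow\mathcal{R}$ prime'', set $M:=\mathrm{M}(\mathcal{R})$; the hypothesis forces $M\ne0$. Let $\mathcal{I},\mathcal{J}$ be nonzero ideals of $\mathcal{R}$. By definition of the monolith $M\subseteq\mathcal{I}$ and $M\subseteq\mathcal{J}$, so monotonicity yields $0\ne M\cdot M\subseteq\mathcal{I}\cdot\mathcal{J}$, whence $\mathcal{I}\cdot\mathcal{J}\ne0$. Thus $\mathcal{R}$ is prime.

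The only step that is not pure bookkeeping is the first implication, and within it the single substantive point is that a finite prime algebra has a unique minimal ideal; this rests entirely on the inclusion $\mathcal{I}\cdot\mathcal{J}\subseteq\mathcal{I}\cap\mathcal{J}$ together with the existence of minimal ideals forced by finiteness. I do not expect any serious obstacle beyond making these two observations precise.
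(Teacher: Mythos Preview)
Your proof is correct and follows essentially the same approach as the paper: both directions rest on the inclusion $I\cdot J\subseteq I\cap J$ for ideals together with finiteness. The only cosmetic difference is that for the forward direction the paper argues directly that any finite intersection of nonzero ideals in a prime algebra is nonzero (since $0\ne I I'\subseteq I\cap I'$ iterates), whereas you recast this as uniqueness of a minimal ideal; the converse direction is identical.
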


\begin{proof}
Assume that \(\mathcal{R}\) is prime. 
Note that for any two nonzero ideals \(I, I'\) in a prime algebra, their intersection is nonzero: 
\[
0 \ne II' \subseteq I \cap I'.
\] 
Hence, the intersection of all nonzero ideals of \(\mathcal{R}\) is a nonzero ideal 
\(\mathrm{M}(\mathcal{R})\). Since \(\mathcal{R}\) is prime, its square is nonzero: \(\mathrm{M}(\mathcal{R})^2 \ne 0\).

Conversely, assume that \(\mathrm{M}(\mathcal{R})^2 \ne 0\). 
Then for any nonzero ideals \(I, I' \subseteq \mathcal{R}\), we have
\[
0 \ne \mathrm{M}(\mathcal{R})^2 \subseteq II',
\] 
which shows that \(\mathcal{R}\) is prime.
\end{proof}

Thus, prime algebras play an analogous role to that of a monolithic group with a non-abelian monolith in the study of the laws of finite groups.

\begin{definition}[Critical algebra]
A \emph{critical algebra} is an algebra \(\mathcal{A}\) that does not belong to the variety generated by its proper sections.
\end{definition}

It is clear that every critical algebra is monolithic. 
Indeed, if the intersection of all nonzero ideals of a critical algebra \(\mathcal{A}\) were \(0\), 
then \(\mathcal{A}\) would belong to the variety generated by all proper quotients of \(\mathcal{A}\), 
contradicting its criticality.

\begin{lemma}
Let \(\mathscr{V}\) be a locally finite variety. 
Then \(\mathscr{V}\) is generated by its critical algebras.
\end{lemma}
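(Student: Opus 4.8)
The plan is to mimic the classical group-theoretic argument (as in Hanna Neumann's book): show that every finite algebra in $\mathscr V$ lies in the variety generated by the critical algebras belonging to $\mathscr V$, and then invoke local finiteness to conclude that these critical algebras generate all of $\mathscr V$.

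\smallskip

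First I would reduce to finite algebras. Since $\mathscr V$ is locally finite, a relatively free algebra $\mathcal R_Z$ of $\mathscr V$ on a finite set $Z$ is finite, and $\mathrm{Id}(\mathscr V) = \bigcap_Z \mathrm{Id}(\mathcal R_Z)$ as $Z$ ranges over finite sets. Hence it suffices to prove: every finite $\mathcal A \in \mathscr V$ satisfies all the identities of the critical algebras in $\mathscr V$; equivalently, $\mathcal A$ lies in the variety generated by the critical members of $\mathscr V$. I would prove this by induction on $|\mathcal A|$ (as a finite set, or on the length of a composition series of $\mathbf k$-submodules — any reasonable finiteness measure works). If $\mathcal A$ is itself critical, there is nothing to do. If $\mathcal A$ is not critical, then by definition $\mathcal A$ belongs to the variety generated by its proper sections $\mathcal R/\mathcal I$; each such proper section is a finite algebra in $\mathscr V$ of strictly smaller size, so by the induction hypothesis each lies in the variety generated by the critical algebras of $\mathscr V$. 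Therefore $\mathcal A$ does too, since $\mathrm{var}$ of a class is determined by the identities and $\mathcal A$'s identities contain the intersection of the identities of its proper sections, which in turn contain the identities of the critical algebras.

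\smallskip

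The one point requiring care — and the main obstacle — is making precise the step "$\mathcal A$ belongs to the variety generated by its proper sections implies $\mathcal A \in \mathrm{var}(\{\text{critical algebras of }\mathscr V\})$", i.e. the transitivity of "being generated by". Concretely: if $\mathcal A \in \mathrm{var}(\mathcal C)$ for a class $\mathcal C$ of algebras, and each $\mathcal C_i \in \mathcal C$ satisfies $\mathrm{Id}(\mathcal A) \subseteq \mathrm{Id}(\mathcal C_i)$ wait — I mean each $\mathcal C_i$ satisfies all identities of some fixed class $\mathcal D$ — then $\mathcal A$ satisfies all identities of $\mathcal D$. This is immediate from the fact that $\mathcal A \in \mathrm{var}(\mathcal C)$ means $\mathrm{Id}(\mathcal A) \supseteq \bigcap_i \mathrm{Id}(\mathcal C_i)$, so $\mathrm{Id}(\mathcal A) \supseteq \bigcap_i \mathrm{Id}(\mathcal C_i) \supseteq \mathrm{Id}(\mathcal D)$. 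So the argument is really just bookkeeping with the lattice anti-isomorphism between varieties and T-ideals, plus a descending chain / well-founded induction. One should also check that a proper section of a finite algebra is again finite and lies in $\mathscr V$ (subalgebras and quotients stay in $\mathscr V$ since $\mathscr V$ is a variety, hence closed under these operations), and that a strictly smaller measure is available: a proper section $\mathcal R/\mathcal I$ has $|\mathcal R/\mathcal I| < |\mathcal A|$ unless $\mathcal R = \mathcal A$ and $\mathcal I = 0$, which is the non-proper case excluded by "proper section" — here one must also rule out $\mathcal R = \mathcal A$, $\mathcal I = 0$; the definition of \emph{proper section} should be read as "section not isomorphic to $\mathcal A$", so any proper section has strictly smaller cardinality.

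\smallskip

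Finally, with the induction complete, every finite $\mathcal A \in \mathscr V$ satisfies $\mathrm{Id}(\mathscr V_{\mathrm{crit}})$ where $\mathscr V_{\mathrm{crit}}$ is the class of critical algebras in $\mathscr V$; conversely every critical algebra in $\mathscr V$ lies in $\mathscr V$, so $\mathrm{Id}(\mathscr V) \subseteq \mathrm{Id}(\mathscr V_{\mathrm{crit}})$. Combined with the reduction to finite algebras via local finiteness ($\mathrm{Id}(\mathscr V) = \bigcap \mathrm{Id}(\mathcal R_Z)$ over finite $Z$, each $\mathcal R_Z$ finite), we get $\mathrm{Id}(\mathscr V) = \mathrm{Id}(\mathscr V_{\mathrm{crit}})$, i.e. $\mathscr V = \mathrm{var}(\mathscr V_{\mathrm{crit}})$, which is the claim.
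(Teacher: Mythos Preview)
Your proposal is correct and follows essentially the same argument as the paper: reduce to finite algebras via local finiteness, then induct on cardinality, using that a non-critical finite algebra lies in the variety generated by its proper sections, each of which is strictly smaller. The paper's proof is terser but identical in substance; your extra care about transitivity of ``generated by'' and about proper sections having strictly smaller cardinality is fine but not strictly needed for the write-up.
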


\begin{proof}
It is known that every variety is generated by its finitely generated algebras. 
Since \(\mathscr{V}\) is locally finite, it is generated by its finite algebras. Hence, it suffices to show that the variety generated by a finite algebra \(\mathcal{A}\) 
is generated by the critical algebras in \(\mathrm{var}(\mathcal{A})\). 
We proceed by induction on the cardinality of \(\mathcal{A}\).

If \(\mathcal{A} \in \mathscr{V}\) has minimal cardinality, then it has no proper sections, 
and thus \(\mathcal{A}\) is critical. Now, if \(\mathcal{A}\) has larger cardinality, then either \(\mathcal{A}\) is critical, 
or it belongs to the variety generated by its proper sections. 
In the latter case, by the induction hypothesis, each proper section belongs to the variety generated by its critical sections, which are also sections of \(\mathcal{A}\). 
Thus \(\mathcal{A}\) is also generated by critical algebras.
\end{proof}

Let \(\mathscr{V}\) be a locally finite variety defined over a unital commutative ring 
by an arbitrary set of finite algebras \(\mathscr{A}\). We shall assume that $\mathscr{A}$ is \emph{section-closed}, i.e., each section of each algebra in $\mathscr{A}$ is isomorphic to some algebra in $\mathscr{A}$.
From the proof of Birkhoff's theorem, the relatively free algebra of finite rank of 
\(\mathscr{V}\) is a subalgebra of a finite direct product of algebras in \(\mathscr{A}\). 
Thus, each \(\mathcal{A} \in \mathscr{V}\) is a section of a finite product of algebras in \(\mathscr{A}\).
 
Given a representation
\begin{equation}\label{rep}
\mathcal{A} = \mathcal{B}/C, \quad \mathcal{B} \subseteq \mathcal{P} = \prod_{i=1}^n \mathcal{S}_i,
\end{equation}
we may assume that 
\[
|\mathcal{S}_1| \ge |\mathcal{S}_2| \ge \cdots \ge |\mathcal{S}_n|.
\]

For such a representation, we consider the sequence 
\((|\mathcal{S}_1|, |\mathcal{S}_2|, \ldots, |\mathcal{S}_n|)\) 
and the lexicographical ordering on these sequences. 
A representation is called \emph{minimal} if its sequence is minimal with respect to this ordering.

\begin{lemma}[Minimal representation]\label{minrep}
Let \(\mathcal{A}\) be an \(\Omega\)-algebra in a locally finite variety \(\mathscr{V}\). 
Consider a minimal representation
\[
\mathcal{A} = \mathcal{B}/C, \quad \mathcal{B} \subseteq \mathcal{P} = \prod_{i=1}^n \mathcal{S}_i.
\]
Then:
\begin{enumerate}
\renewcommand{\labelenumi}{(\roman{enumi})}
\item each \(\mathcal{S}_i\) is a critical algebra,
\item \(\mathcal{B}\) is a subdirect product of the \(\mathcal{S}_i\), \(i=1,\ldots,n\),
\item \(\mathcal{D} \subseteq \mathcal{S}_i\) is an ideal of \(\mathcal{S}_i\) if and only if 
      \(\mathcal{B} \mathcal{D} \subseteq \mathcal{D}\),
\item any nontrivial ideal \(\mathcal{D} \subseteq \mathcal{S}_i\) has nontrivial intersection with \(\mathcal{B}\),
\item \(C \cap \mathcal{S}_i = 0\) for each \(i=1,\ldots,n\).
\end{enumerate}
In particular, if $\mathscr{V}=\mathrm{var}(\mathcal{A}')$ then each $\mathcal{S}_i$ is a critical section of $\mathcal{A}'$.
\end{lemma}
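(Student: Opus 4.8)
The plan is to use minimality of the representation as a uniform lever: for each assertion I assume it fails and then produce a representation of the \emph{same} algebra $\mathcal{A}$ whose sorted cardinality sequence is lexicographically smaller, contradicting the choice of the representation. Write $\pi_i\colon\mathcal{P}\to\mathcal{S}_i$ for the projections and $\iota_i\colon\mathcal{S}_i\to\mathcal{P}$ for the coordinate inclusions. Since every $\omega\in\Omega_n$ is multilinear, $\iota_i(\mathcal{S}_i)$ is an ideal of $\mathcal{P}$, and more generally whenever $\mathcal{D}$ is an ideal of $\mathcal{S}_i$ then $\iota_i(\mathcal{D})$ is an ideal of $\mathcal{P}$: an operation evaluated with one argument in $\iota_i(\mathcal{D})$ has every coordinate but the $i$-th equal to $0$, and its $i$-th coordinate in $\mathcal{D}$. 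We read ``$\mathcal{D}\subseteq\mathcal{S}_i$'' as $\mathcal{D}\subseteq\iota_i(\mathcal{S}_i)$, and products of subsets of $\mathcal{P}$ as computed inside $\mathcal{P}$. The single combinatorial fact, used at every step, is: replacing one factor $\mathcal{S}_i$ of a representation by finitely many algebras, each of cardinality $<|\mathcal{S}_i|$ and leaving the other factors unchanged, yields a sorted cardinality sequence that is strictly lexicographically smaller — locate the block of entries equal to $|\mathcal{S}_i|$; the new sequence agrees with the old on the (possibly longer) initial run of strictly larger values, then has one fewer copy of $|\mathcal{S}_i|$, hence drops. For the final assertion, take $\mathscr{A}$ to be the family of all sections of $\mathcal{A}'$: it is section-closed, consists of finite algebras, and generates $\mathrm{var}(\mathcal{A}')$, so the discussion preceding the lemma provides a representation of $\mathcal{A}$ over $\mathscr{A}$, and a minimal one may be fixed.

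I would settle (ii)--(v) first. For (ii): if some $\pi_i(\mathcal{B})$ is a proper subalgebra of $\mathcal{S}_i$, then $\mathcal{B}$ already lies in the product of the $\mathcal{S}_l$, $l\ne i$, with $\pi_i(\mathcal{B})$ in place of $\mathcal{S}_i$; and $\pi_i(\mathcal{B})$, a homomorphic image of the subalgebra $\mathcal{B}$, is a subalgebra of $\mathcal{S}_i$, hence isomorphic to a member of $\mathscr{A}$ — a strictly smaller representation of $\mathcal{A}=\mathcal{B}/C$. For (iii): if $\mathcal{D}$ is an ideal of $\mathcal{S}_i$, every value of a multilinear polynomial one of whose arguments lies in $\iota_i(\mathcal{D})$ lies in $\iota_i(\mathcal{D})$, since each monomial absorbs that argument in the $i$-th coordinate and vanishes in the others, so $\mathcal{B}\mathcal{D}\subseteq\mathcal{D}$; conversely, assuming $\mathcal{B}\mathcal{D}\subseteq\mathcal{D}$, to see that an operation $\omega$ with one argument in $\mathcal{D}$ keeps us in $\mathcal{D}$, use (ii) to realize the other arguments as coordinate images of elements of $\mathcal{B}$ and evaluate $\omega$ in $\mathcal{P}$ with one such $\mathcal{B}$-element in its slot — the result equals $\iota_i$ of the sought value and lies in $\mathcal{B}\mathcal{D}\subseteq\mathcal{D}$. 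For (iv): if $0\ne\mathcal{D}$ is an ideal of $\mathcal{S}_i$ with $\mathcal{D}\cap\mathcal{B}=0$, the quotient $\mathcal{P}\to\mathcal{P}/\iota_i(\mathcal{D})$ — the map $\mathcal{S}_i\to\mathcal{S}_i/\mathcal{D}$ on the $i$-th factor, the identity on the rest — restricts to an embedding of $\mathcal{B}$ (its kernel being $\mathcal{B}\cap\iota_i(\mathcal{D})=0$) that carries $C$ onto an isomorphic ideal, so $\mathcal{A}$ acquires a representation with $\mathcal{S}_i$ replaced by the strictly smaller $\mathcal{S}_i/\mathcal{D}\in\mathscr{A}$. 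For (v): by (ii) together with the computation in (iii), $K:=C\cap\iota_i(\mathcal{S}_i)$ is an ideal of $\mathcal{S}_i$ (lift the $\mathcal{S}_i$-arguments into $\mathcal{B}$ and use that the ideal $C$ absorbs them); if $K\ne0$ then, since $K\subseteq C$, quotienting the $i$-th factor by $K$ leaves the image of $C$ the full defect, hence the image of $\mathcal{B}$ again with quotient $\mathcal{A}$ — once more a strictly smaller representation.

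Finally, (i). Suppose some $\mathcal{S}_i$ is not critical, so it lies in the variety generated by its proper sections; these form a section-closed family $\mathscr{T}$, each member of which has cardinality $<|\mathcal{S}_i|$ and lies in $\mathscr{A}$. As $\mathrm{var}(\mathscr{T})$ is locally finite and $\mathcal{S}_i$ is finite, Proposition~\ref{free_alg} and Birkhoff give a surjection $\psi\colon\mathcal{E}\to\mathcal{S}_i$ with $\mathcal{E}$ a subalgebra of a finite product $T_1\times\cdots\times T_r$, each $T_j\in\mathscr{T}$. Inflate the $i$-th coordinate: let $\Psi\colon\mathcal{P}'\to\mathcal{P}$ be $\psi$ on the $i$-th block and the identity on the others, where $\mathcal{P}'$ is $\mathcal{P}$ with the factor $\mathcal{S}_i$ replaced by $T_1\times\cdots\times T_r$ (and $\mathcal{E}$ sitting over it); put $\mathcal{B}':=\Psi^{-1}(\mathcal{B})$, a subalgebra of $\mathcal{P}'$, and $C':=(\Psi|_{\mathcal{B}'})^{-1}(C)$. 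Then $\Psi|_{\mathcal{B}'}\colon\mathcal{B}'\to\mathcal{B}$ is surjective with kernel contained in $C'$, whence $\mathcal{A}=\mathcal{B}/C\cong\mathcal{B}'/C'$: a representation in which $\mathcal{S}_i$ has been replaced by the $T_j$'s, all of cardinality $<|\mathcal{S}_i|$, so by the combinatorial fact its sequence is strictly smaller — a contradiction. Hence every $\mathcal{S}_i$ is critical; and when $\mathscr{A}$ is the family of sections of $\mathcal{A}'$, each $\mathcal{S}_i$ is then a critical section of $\mathcal{A}'$, as asserted.

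The main obstacle is the pullback in (i): from an arbitrary subdirect presentation of a non-critical factor by its proper sections one must manufacture a genuine representation of $\mathcal{A}$ itself — a subalgebra of a product of members of $\mathscr{A}$, modulo an ideal — and check that $\Psi^{-1}(\mathcal{B})$ and $\Psi^{-1}(C)$ behave as needed: surjectivity of the restricted map, the kernel landing inside $C'$, preservation of the isomorphism type of the quotient. Bound up with this is the slightly delicate lexicographic bookkeeping, since replacing one factor by several enlarges the number of factors while one still must be certain the sorted sequence drops. By contrast, (ii)--(v) are routine once one bears in mind that coordinate subspaces are ideals but — when $\Omega_0\ne\emptyset$ — not subalgebras of $\mathcal{P}$, and that an ideal of a factor $\mathcal{S}_i$, sitting in its coordinate, is an ideal of the whole product.
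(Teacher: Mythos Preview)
Your proof is correct and follows essentially the same strategy as the paper's: each item is established by assuming failure and exhibiting a lexicographically smaller representation of $\mathcal{A}$. The paper's proof is considerably terser---it omits the explicit pullback construction in (i), the two directions of (iii), the verification that $C\cap\iota_i(\mathcal{S}_i)$ is an ideal of $\mathcal{S}_i$ in (v), and the lexicographic bookkeeping when one factor is replaced by several---whereas you supply these details carefully; but the underlying ideas coincide.
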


\begin{proof}
We prove each property for any minimal representation.

\medskip
\noindent
(i) If some \(\mathcal{S}_i\) is not critical, then it belongs to the variety generated by its proper sections. 
Hence \(\mathcal{S}_i\) is a section of a product of algebras of smaller cardinality, and so is \(\mathcal{A}\), 
contradicting the minimality of the representation.

\medskip
\noindent
(ii) Denote by \(\pi_i: \mathcal{P} \to \mathcal{S}_i\) the projection. 
We may replace \(\mathcal{P}\) by \(\prod_{i=1}^n \pi_i(\mathcal{B})\). 
This would contradict minimality unless \(\pi_i(\mathcal{B}) = \mathcal{S}_i\) for each \(i\). 
In particular, (iii) then holds.

\medskip
\noindent
(iv) Suppose \(0 \ne \mathcal{D} \subseteq \mathcal{S}_1\) has trivial intersection with \(\mathcal{B}\). 
Then we could replace \(\mathcal{P}\) by 
\(\mathcal{S}_1/\mathcal{D} \times \prod_{i=2}^n \mathcal{S}_i\), 
contradicting the minimality of the representation.

\medskip
\noindent
(v) Suppose \(C \cap \mathcal{S}_1 \ne 0\). 
Then we could replace \(\mathcal{P}\) by 
\(\mathcal{S}_1/(C \cap \mathcal{S}_1) \times \prod_{i=2}^n \mathcal{S}_i\), 
again contradicting minimality.
\end{proof}

The next result states that every prime algebra is critical.

\begin{Thm}\label{prime_critical}
Each finite prime \(\Omega\)-algebra \(\mathcal{A}\) is critical. 
In addition, if \(\mathcal{A}\in\mathrm{var}(\mathcal{A}')\), then \(\mathcal{A}\) is a section of \(\mathcal{A}'\).
\end{Thm}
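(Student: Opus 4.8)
The plan is to prove both claims at once, by applying the minimal--representation machinery of Lemma~\ref{minrep} inside a well--chosen locally finite variety $\mathscr{V}$ containing $\mathcal{A}$, and then using primeness of $\mathcal{A}$ to force the representation to have a single factor. Suppose first that $\mathcal{A}$ is not critical. Then $\mathcal{A}$ lies in the variety $\mathscr{V}$ generated by its proper sections; since $\mathcal{A}$ is finite, $\mathscr{V}$ is locally finite and is defined by the section--closed family $\mathscr{A}$ of all proper sections of $\mathcal{A}$ (sections of proper sections are again proper sections, as they have cardinality $<|\mathcal{A}|$). By the discussion preceding Lemma~\ref{minrep}, $\mathcal{A}$ admits a minimal representation $\mathcal{A}=\mathcal{B}/C$ with $\mathcal{B}\subseteq\mathcal{P}=\prod_{i=1}^{n}\mathcal{S}_i$ and each $\mathcal{S}_i\in\mathscr{A}$; by Lemma~\ref{minrep}(i) each $\mathcal{S}_i$ is critical, hence monolithic, and $|\mathcal{S}_i|<|\mathcal{A}|$. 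As usual we identify $\mathcal{S}_i$ with the ideal $0\times\cdots\times\mathcal{S}_i\times\cdots\times 0$ of $\mathcal{P}$, so that $\mathcal{B}_i:=\mathcal{B}\cap\mathcal{S}_i$ is an ideal of $\mathcal{B}$; note $n\ge 1$ since $\mathcal{A}$, being prime, is nonzero.

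The crux is to show that $n=1$. For $i\ne j$ one has $\mathcal{S}_i\cdot\mathcal{S}_j=0$ inside $\mathcal{P}$: evaluating a multilinear polynomial coordinatewise, in each coordinate at least one of the two arguments taken from $\mathcal{S}_i,\mathcal{S}_j$ is zero, so the value vanishes by $\mathbf{k}$--multilinearity of the operations. Consequently $\mathcal{B}_i\cdot\mathcal{B}_j=0$, and applying the canonical surjection $\phi\colon\mathcal{B}\to\mathcal{A}$ (with $\ker\phi=C$) shows that the ideals $\phi(\mathcal{B}_i),\phi(\mathcal{B}_j)$ of $\mathcal{A}$ have zero product; since $\mathcal{A}$ is prime, one of them is $0$, say $\mathcal{B}_i\subseteq C$, and then $\mathcal{B}_i=\mathcal{B}_i\cap C=\mathcal{S}_i\cap C=0$ by Lemma~\ref{minrep}(v). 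Hence at most one $\mathcal{B}_i$ is nonzero. On the other hand, for each $i$ the monolith $\mathrm{M}(\mathcal{S}_i)$ is a nonzero ideal of $\mathcal{S}_i$, so Lemma~\ref{minrep}(iv) yields $0\ne\mathrm{M}(\mathcal{S}_i)\cap\mathcal{B}\subseteq\mathcal{S}_i\cap\mathcal{B}=\mathcal{B}_i$, i.e.\ $\mathcal{B}_i\ne0$ for every $i$. These two facts are compatible only if $n=1$.

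With $n=1$, the subdirectness of $\mathcal{B}$ in $\mathcal{P}=\mathcal{S}_1$ (Lemma~\ref{minrep}(ii)) gives $\mathcal{B}=\mathcal{S}_1$, so $\mathcal{A}=\mathcal{S}_1/C$ is a quotient of $\mathcal{S}_1$. In the non--critical scenario above, $\mathcal{S}_1$ is a proper section of $\mathcal{A}$, so $|\mathcal{A}|\le|\mathcal{S}_1|<|\mathcal{A}|$, a contradiction; therefore $\mathcal{A}$ is critical, proving the first assertion. For the second, run the same argument with $\mathscr{V}=\mathrm{var}(\mathcal{A}')$ (which, $\mathcal{A}'$ being finite, is locally finite and defined by the section--closed family of sections of $\mathcal{A}'$): here $\mathcal{A}\in\mathscr{V}$ by hypothesis, the same reasoning gives $n=1$, and by the last sentence of Lemma~\ref{minrep} the surviving factor $\mathcal{S}_1$ is a critical section of $\mathcal{A}'$. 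Since $\mathcal{A}=\mathcal{S}_1/C$ is a quotient of $\mathcal{S}_1$ and a section of a section is a section, $\mathcal{A}$ is a section of $\mathcal{A}'$.

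The step I expect to be the main obstacle is the collapse to $n=1$. One must verify that $\mathcal{B}_i\cdot\mathcal{B}_j=0$ genuinely holds for the $\Omega$--algebra product --- this is precisely where $\mathbf{k}$--multilinearity of the operations, rather than mere additivity, is used --- and that properties (iv) and (v) of a minimal representation are applied to the right objects: the monolith of each critical factor $\mathcal{S}_i$ in one case, and the kernel $C$ in the other. Everything else (local finiteness of the ambient varieties, section--closure of the relevant families, transitivity of ``being a section'', and the fact that images and preimages of ideals under surjections are ideals) is routine.
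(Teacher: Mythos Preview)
Your proof is correct and follows essentially the same route as the paper's: take a minimal representation, use $\mathcal{S}_i\cdot\mathcal{S}_j=0$ together with Lemma~\ref{minrep}(iv) and (v) to force $n=1$, and finish via (ii). The paper streamlines your last step by noting that once $n=1$ one has $C=C\cap\mathcal{S}_1=0$ by (v), so in fact $\mathcal{A}=\mathcal{S}_1$ on the nose; this makes your cardinality contradiction and the appeal to transitivity of sections unnecessary, and shows directly that $\mathcal{A}$ itself is the critical section of $\mathcal{A}'$.
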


\begin{proof}
Let \(\mathcal{A}\) be a finite prime \(\Omega\)-algebra and write 
\(\mathcal{A} = \mathcal{B}/C\), where 
\(\mathcal{B} \subseteq \mathcal{P} = \prod_{i=1}^n \mathcal{S}_i\), as in \Cref{minrep}. By (iv), set \(I_j := \mathcal{S}_j \cap \mathcal{B}\), which is a nonzero ideal of \(\mathcal{B}\). Moreover, by (v), its projection \(I_j' := I_j/(I_j \cap C)\) is a nonzero ideal of \(\mathcal{B}/C\). If \(n>1\), then \(\mathcal{S}_1 \mathcal{S}_2 = 0\), so that \(I_1' I_2' = 0\), which contradicts the primeness of \(\mathcal{A}\). Hence, \(n=1\).  Again by (v), we have \(C = C \cap \mathcal{S}_1 = 0\), and by (ii), \(\mathcal{B} = \mathcal{S}_1\). 
Thus, \(\mathcal{A} = \mathcal{S}_1\) is critical. 

Finally, if \(\mathcal{A} \in \mathrm{var}(\mathcal{A}')\), we may assume that \(\mathcal{S}_1\) is a critical section of \(\mathcal{A}'\), proving the second statement.
\end{proof}

The first statement is analogous to the result \cite[Corollary 52.34]{Hanna} for finite groups. The second statement is known for certain finite simple algebras (here, algebra means an arbitrary nonempty set with an arbitrary set of $n$-ary operations) (see \cite[Theorem 14.5]{HM}). In the next claim, as usual, we assume that \(\Omega\) contains at least one \(n\)-ary 
operation with \(n\ge 2\). Now an important consequence of Theorem \ref{prime_critical} is the following.

\begin{cor}\label{main_prime}
Let \(\mathcal{A}\) and \(\mathcal{B}\) be finite prime (in particular, simple) 
\(\Omega\)-algebras over the same unital commutative ring $\textbf{k}$, satisfying the same set of polynomial identities. 
Then \(\mathcal{A} \cong \mathcal{B}\).
\end{cor}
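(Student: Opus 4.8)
The plan is to deduce this directly from \Cref{prime_critical}, so the argument is short. First I would unwind the hypothesis: saying that \(\mathcal{A}\) and \(\mathcal{B}\) satisfy the same polynomial identities means \(\mathrm{Id}(\mathcal{A}) = \mathrm{Id}(\mathcal{B})\), hence \(\mathrm{var}(\mathcal{A}) = \mathrm{var}(\mathcal{B})\). In particular \(\mathcal{A} \in \mathrm{var}(\mathcal{B})\) and \(\mathcal{B} \in \mathrm{var}(\mathcal{A})\). Both algebras are finite, hence PI-algebras, so all of the machinery above applies.

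Next I would apply the second assertion of \Cref{prime_critical} twice. Since \(\mathcal{A}\) is finite and prime and lies in \(\mathrm{var}(\mathcal{B})\), taking \(\mathcal{A}' = \mathcal{B}\) gives that \(\mathcal{A}\) is a section of \(\mathcal{B}\); write \(\mathcal{A} \cong \mathcal{R}/\mathcal{I}\) with \(\mathcal{R} \subseteq \mathcal{B}\) a subalgebra and \(\mathcal{I}\) an ideal of \(\mathcal{R}\). Symmetrically, \(\mathcal{B}\) is a section of \(\mathcal{A}\). Counting elements in the first representation yields \(|\mathcal{A}| \le |\mathcal{R}| \le |\mathcal{B}|\), and by symmetry \(|\mathcal{B}| \le |\mathcal{A}|\); since everything is finite, all these cardinalities coincide. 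In particular \(|\mathcal{R}| = |\mathcal{B}|\) forces \(\mathcal{R} = \mathcal{B}\), and \(|\mathcal{R}/\mathcal{I}| = |\mathcal{R}|\) forces \(\mathcal{I} = 0\), whence \(\mathcal{A} \cong \mathcal{R}/\mathcal{I} = \mathcal{B}\).

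I do not expect a genuine obstacle here, as the theorem does all the work; the only step meriting a word of care is the final collapsing of the section, which is exactly where finiteness is indispensable — for infinite algebras a proper section can share the cardinality of the ambient algebra without being isomorphic to it, so the conclusion really rests on the finiteness hypothesis both as an input to \Cref{prime_critical} and in this counting step.
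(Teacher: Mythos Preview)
Your argument is correct and follows essentially the same route as the paper's own proof: apply \Cref{prime_critical} in both directions to get mutual sections, compare cardinalities, and collapse. You spell out the final counting step in slightly more detail than the paper does, but the strategy is identical.
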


\begin{proof}
Since \(\mathcal{A}\) is prime and belongs to \(\mathrm{var}(\mathcal{B})\), 
\Cref{prime_critical} implies that \(\mathcal{A}\) is isomorphic to a section of \(\mathcal{B}\). 
In particular, \(|\mathcal{A}| \le |\mathcal{B}|\). Reversing the argument, \(\mathcal{B}\) is a section of \(\mathcal{A}\). 
This can occur only if \(\mathcal{A} \cong \mathcal{B}\).
\end{proof}

We remark that Theorem \ref{prime_critical} and Corollary \ref{main_prime} are similar to analogous results for finite groups obtained in \cite{KN} (see also \cite[Theorem 53.31]{Hanna}) and \cite[Corollary 53.35]{Hanna}.

Since a finite semigroup grading and its graded polynomial identities can be described 
via a set of unary operations and their polynomial identities (see \cite{BY,BY2}), 
\Cref{main_prime} specializes as follows:

\begin{cor}
Let \(G\) be a semigroup, and let \(\mathcal{A}\) and \(\mathcal{B}\) be 
graded-prime (in particular, graded-simple) finite \(G\)-graded \(\Omega\)-algebras over 
the same unital commutative ring, where \(\Omega\) contains at least one \(n\)-ary 
operation with \(n\ge 2\). 
Then \(\mathrm{Id}_G(\mathcal{A}) = \mathrm{Id}_G(\mathcal{B})\) if and only if \(\mathcal{A} \cong \mathcal{B}\) as \(G\)-graded algebras.\qed
\end{cor}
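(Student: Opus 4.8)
The plan is to derive this from \Cref{main_prime} by encoding the $G$-grading as a family of unary operations, in the spirit of \cite{BY,BY2}. One direction is immediate: if $\varphi\colon\mathcal{A}\to\mathcal{B}$ is a $G$-graded $\Omega$-isomorphism, then it restricts to $\mathbf{k}$-linear isomorphisms $\mathcal{A}_g\to\mathcal{B}_g$ for every $g\in G$, so a graded polynomial $f(x_1^{(g_1)},\ldots,x_m^{(g_m)})$ vanishes on all homogeneous substitutions in $\mathcal{A}$ precisely when it vanishes on all homogeneous substitutions in $\mathcal{B}$; hence $\mathrm{Id}_G(\mathcal{A})=\mathrm{Id}_G(\mathcal{B})$.

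For the other direction, assume $\mathrm{Id}_G(\mathcal{A})=\mathrm{Id}_G(\mathcal{B})$. Adjoin to $\Omega$ a new unary operation symbol $\epsilon_g$ for each $g\in G$, obtaining $\Omega'=\Omega\cup\{\epsilon_g\mid g\in G\}$, and make $\mathcal{A}$ (respectively $\mathcal{B}$) into an $\Omega'$-algebra by interpreting $\epsilon_g$ as the $\mathbf{k}$-linear projection of $\mathcal{A}$ (respectively $\mathcal{B}$) onto its $g$-component. Then $\mathcal{A}$ and $\mathcal{B}$ are finite $\Omega'$-algebras over the same ring $\mathbf{k}$ and over the same signature $\Omega'$, and $\Omega'$ still contains an $n$-ary operation with $n\ge 2$. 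The translation of \cite{BY,BY2} supplies the dictionary we need: a $\mathbf{k}$-submodule of $\mathcal{A}$ is a graded $\Omega$-ideal if and only if it is an $\Omega'$-ideal, and for two graded $\Omega$-ideals the product in the sense of $\Omega$ coincides with the product in the sense of $\Omega'$; consequently $\mathcal{A}$ is graded-prime as an $\Omega$-algebra if and only if it is prime as an $\Omega'$-algebra, and likewise for $\mathcal{B}$. Moreover, graded identities and $\Omega'$-identities determine one another, so from $\mathrm{Id}_G(\mathcal{A})=\mathrm{Id}_G(\mathcal{B})$ we get $\mathrm{Id}_{\Omega'}(\mathcal{A})=\mathrm{Id}_{\Omega'}(\mathcal{B})$.

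Thus $\mathcal{A}$ and $\mathcal{B}$ are finite prime $\Omega'$-algebras over $\mathbf{k}$ satisfying the same polynomial identities, and \Cref{main_prime} produces an $\Omega'$-isomorphism $\varphi\colon\mathcal{A}\to\mathcal{B}$. Since $\varphi$ commutes with each $\epsilon_g$, we obtain $\varphi(\mathcal{A}_g)=\varphi(\epsilon_g(\mathcal{A}))=\epsilon_g(\varphi(\mathcal{A}))=\epsilon_g(\mathcal{B})=\mathcal{B}_g$ for every $g$, so $\varphi$ is a $G$-graded $\Omega$-isomorphism, which is what we wanted.

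The one step requiring genuine care is the dictionary quoted from \cite{BY,BY2}, above all the equivalence between graded-primeness and $\Omega'$-primeness: one must check that evaluating a multilinear $\Omega'$-polynomial on elements of two graded ideals and of $\mathcal{A}$ yields nothing outside the $\Omega$-product of those ideals, the point being that the new operations $\epsilon_g$ merely extract homogeneous components of elements already available and therefore cannot enlarge the relevant ideal product. With that bookkeeping in hand, the corollary is exactly the specialization of \Cref{main_prime} to the signature $\Omega'$.
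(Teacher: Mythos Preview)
Your proposal is correct and follows exactly the route the paper indicates: the paper's proof consists solely of the remark preceding the corollary that a $G$-grading and its graded identities can be encoded via unary projection operations (citing \cite{BY,BY2}), after which \Cref{main_prime} applies directly, and you have simply spelled out that specialization in detail. The only difference is that you make explicit the dictionary (graded ideals $\leftrightarrow$ $\Omega'$-ideals, graded-prime $\leftrightarrow$ $\Omega'$-prime, graded identities $\leftrightarrow$ $\Omega'$-identities) and the recovery of a graded isomorphism from an $\Omega'$-isomorphism, all of which the paper leaves implicit in the \qed.
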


\section{Similarity}
The following constructions are similar to the classical constructions in the theory of finite groups; we include them here for the sake of completeness.
\begin{definition}
Given an $\Omega$-algebra $\mathcal{A}$ and subsets $\mathcal{B}$, $S\subseteq\mathcal{A}$, we set
\begin{align*}
\mathrm{Ann}_\mathcal{B}(S)=\{x\in\mathcal{B}\mid &\text{for all multilinear $f(y_1,y_2,x_1,\ldots,x_n)\in\mathbf{k}_\Omega\langle X\rangle$},\\&f(x,s,\ldots,b_n)=0,\forall b_1,\ldots,b_n\in\mathcal{B},s\in S\}.
\end{align*}
\end{definition}
If $\mathcal{B}$ is a subalgebra, then it is clear that $\mathrm{Ann}_\mathcal{B}(S)$ is an ideal, and it is the greatest ideal $I\subseteq\mathcal{B}$ such that $S\cdot I=0$. Thus, an algebra $\mathcal{A}$ is prime if and only if $\mathrm{Ann}_\mathcal{A}(I)=0$ for each nonzero ideal $I\subseteq\mathcal{A}$. In addition, a finite algebra $\mathcal{A}$ is prime if and only if $\mathrm{Ann}_\mathcal{A}(\mathrm{M}(\mathcal{A}))=0$.

Now, if $I\subseteq\mathcal{A}$ is an ideal, then $I$ has a structure of $\mathcal{A}$-module. In addition, $I$ has a structure of $\mathcal{A}/\mathrm{Ann}_\mathcal{A}(I)$-module via
$$
f(x,a_1+\mathrm{Ann}_\mathcal{A}(I),\ldots,a_n+\mathrm{Ann}_\mathcal{A}(I)):=f(x,a_1,\ldots,a_n),
$$
for each multilinear $f=f(x_0,x_1,\ldots,x_n)$, $x\in I$ and $a_1$, \ldots, $a_n\in\mathcal{A}$.

\begin{definition}
Let $\mathcal{A}$ and $\mathcal{B}$ be $\Omega$-algebras and $I\subseteq\mathcal{A}$ and $J\subseteq\mathcal{B}$ ideals. We say that $I$ and $J$ are \emph{similar}, denoted by $(I\triangleleft\mathcal{A})\sim(J\triangleleft\mathcal{B})$, if there exist algebra isomorphisms $\alpha:\mathcal{A}/\mathrm{Ann}_\mathcal{A}(I)\to\mathcal{B}/\mathrm{Ann}_\mathcal{B}(J)$ and $\mu:I\to J$ such that
$$
\mu(\omega(a_1,\ldots,a_n))=\omega(\varepsilon_1(a_1),\ldots,\varepsilon_n(a_n)),\quad\omega\in\Omega,
$$
for each $a_1$, \dots, $a_n\in\mathcal{A}/\mathrm{Ann}_\mathcal{A}(I)\cup I$, where $|\{a_1,\ldots,a_n\}\cap I|=1$, and $\varepsilon_i(a_i)=\mu(a_i)$, if $a_i\in I$ and $\varepsilon_i(a_i)=\alpha(a_i)$ otherwise, for each $i$.
\end{definition}
It is clear that similarity is an equivalence relation. Similarity states that not only $I$ and $J$ are isomorphic algebras, but they have isomorphic structures of modules.

We shall need the following result:
\begin{lemma}\label{lem1}
Let $\mathcal{A}\subseteq\mathcal{S}_1\times\mathcal{S}_2$, and assume that $\mathcal{A}$ projects onto $\mathcal{S}_2$. If $I\subseteq\mathcal{A}$ is an ideal contained in $\mathcal{A}\cap\mathcal{S}_2$, then
$$
(I\triangleleft\mathcal{A})\sim(I\triangleleft\mathcal{S}_2).
$$
\end{lemma}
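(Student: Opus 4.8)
The plan is to exhibit the two isomorphisms demanded by the definition of similarity, taking $\mu\colon I\to I$ to be the identity and constructing $\alpha$ from the projection $\mathcal{A}\to\mathcal{S}_2$, and then to verify the compatibility relation by a coordinate computation. Throughout I identify $\mathcal{S}_2$ with $\{0\}\times\mathcal{S}_2\subseteq\mathcal{S}_1\times\mathcal{S}_2$, so that $\mathcal{A}\cap\mathcal{S}_2$ is the set of elements of $\mathcal{A}$ with first coordinate $0$, and I write $\pi\colon\mathcal{A}\to\mathcal{S}_2$ for the second projection, which is onto by hypothesis. First I would observe that $I$, being an ideal of $\mathcal{A}$ contained in $\mathcal{A}\cap\mathcal{S}_2$, is also an ideal of $\mathcal{S}_2$: given $\omega\in\Omega_n$ with $n\ge1$, an element of $I$ in one slot, and elements of $\mathcal{S}_2$ in the remaining slots, lift the latter through $\pi$ to elements of $\mathcal{A}$; applying $\omega$ in $\mathcal{A}$ lands in $I$, and reading off the second coordinate (the first is $0$) produces the required element of $I$. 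Since $\pi|_I$ is injective (as $I\subseteq\{0\}\times\mathcal{S}_2$), the identity map $\mu$ makes sense between the two copies of $I$.

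Next I would construct $\alpha$. Let $\bar\pi\colon\mathcal{A}\to\mathcal{S}_2/\mathrm{Ann}_{\mathcal{S}_2}(I)$ be $\pi$ followed by the canonical quotient map; it is surjective. The crux is the identity $\ker\bar\pi=\mathrm{Ann}_\mathcal{A}(I)$. Both inclusions follow from one remark: for $a\in\mathcal{A}$, a multilinear $f(y_1,y_2,x_1,\ldots,x_n)$, elements $b_1,\ldots,b_n\in\mathcal{A}$, and $i\in I$, the value $f(a,i,b_1,\ldots,b_n)$ automatically lies in $I$ because $I$ is an ideal of $\mathcal{A}$ and $i\in I$; hence its first coordinate is $0$, and it vanishes in $\mathcal{A}$ exactly when its second coordinate $f(\pi(a),i,\pi(b_1),\ldots,\pi(b_n))$ vanishes in $\mathcal{S}_2$. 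Since $b_1,\ldots,b_n$ range over all of $\mathcal{A}$ and $\pi$ is onto, this says precisely that $a\in\mathrm{Ann}_\mathcal{A}(I)$ if and only if $\pi(a)\in\mathrm{Ann}_{\mathcal{S}_2}(I)$, i.e.\ $a\in\ker\bar\pi$. The homomorphism theorem then gives an isomorphism $\alpha\colon\mathcal{A}/\mathrm{Ann}_\mathcal{A}(I)\to\mathcal{S}_2/\mathrm{Ann}_{\mathcal{S}_2}(I)$ sending $a+\mathrm{Ann}_\mathcal{A}(I)$ to $\pi(a)+\mathrm{Ann}_{\mathcal{S}_2}(I)$.

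It then remains to check the compatibility identity. Fix $\omega\in\Omega_n$ and elements $a_1,\ldots,a_n$, exactly one of which, say $a_{j_0}=x$, lies in $I$, while the others $a_i=c_i+\mathrm{Ann}_\mathcal{A}(I)$ lie in $\mathcal{A}/\mathrm{Ann}_\mathcal{A}(I)$. By the definition of the module action recalled just before the notion of similarity (and well defined there), $\mu\big(\omega(a_1,\ldots,a_n)\big)$ equals $\omega(c_1,\ldots,x,\ldots,c_n)$ computed in $\mathcal{A}$, with $x$ in the $j_0$-th slot; this element lies in $I$, so it has first coordinate $0$ and second coordinate $\omega(\pi(c_1),\ldots,x,\ldots,\pi(c_n))$. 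On the other hand, $\omega(\varepsilon_1(a_1),\ldots,\varepsilon_n(a_n))$, with $\varepsilon_{j_0}=\mu=\mathrm{id}$ and $\varepsilon_i=\alpha$ for $i\ne j_0$, unwinds through the $\mathcal{S}_2/\mathrm{Ann}_{\mathcal{S}_2}(I)$-module action to $\omega(\pi(c_1),\ldots,x,\ldots,\pi(c_n))$ computed in $\mathcal{S}_2$. Under the identification of $\mathcal{S}_2$ with $\{0\}\times\mathcal{S}_2$ these two expressions coincide, which is exactly the relation required; therefore $(I\triangleleft\mathcal{A})\sim(I\triangleleft\mathcal{S}_2)$.

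I expect the only genuinely delicate step to be the identification $\ker\bar\pi=\mathrm{Ann}_\mathcal{A}(I)$: everything hinges on the fact that, because $I$ is an ideal of $\mathcal{A}$, every evaluation of the form $f(a,i,\dots)$ is forced to land in $I$, so its first coordinate carries no information and the annihilator condition over $\mathcal{A}$ collapses to the annihilator condition over $\mathcal{S}_2$. The remaining care is routine: one keeps in mind that $I$ need not contain the nullary constants, so ``$I$ as an algebra'' is understood with respect to the operations of arity $\ge1$, for which $\mu=\mathrm{id}$ is trivially an isomorphism.
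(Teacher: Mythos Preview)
Your proof is correct and follows precisely the approach of the paper: take $\mu=\mathrm{id}_I$ and build $\alpha$ from the projection $\mathcal{A}\to\mathcal{S}_2$ composed with the quotient by $\mathrm{Ann}_{\mathcal{S}_2}(I)$. The paper simply asserts that the factorization and the similarity conditions are ``elementary to check,'' whereas you have spelled out the key computation $\ker\bar\pi=\mathrm{Ann}_\mathcal{A}(I)$ and the compatibility identity in full; nothing is missing and nothing is done differently.
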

\begin{proof}
We let $\mu:I\to I$ be the identity map. Denote by $\alpha:\mathcal{A}\to\mathcal{S}_2/\mathrm{Ann}_{\mathcal{S}_2}(I)$ the composition of the projection $\mathcal{A}\to\mathcal{S}_2$ with the natural map $\mathcal{S}_2\to\mathcal{S}_2/\mathrm{Ann}_{\mathcal{S}_2}(I)$. It is elementary to check that $\alpha$ factors through $\mathcal{A}/\mathrm{Ann}_\mathcal{A}(I)$. Moreover, $\alpha:\mathcal{A}/\mathrm{Ann}_\mathcal{A}(I)\to\mathcal{S}_2/\mathrm{Ann}_{\mathcal{S}_2}(I)$ is an algebra isomorphism that satisfies the conditions of similarity.
\end{proof}

The following is the main claim of this section.
\begin{lemma}\label{lem2}
If $\mathcal{A}=\mathcal{B}/C$, where $B\subseteq\mathcal{P}=\prod_{i=1}^n\mathcal{S}_i$ is a minimal representation of $\mathcal{A}$, then for each $i$, there exists a minimal ideal $M_i\subseteq\mathcal{A}$ such that
$$
(M_i\triangleleft\mathcal{A})\sim(\mathrm{M}(\mathcal{S}_i)\triangleleft\mathcal{S}_i).
$$
\end{lemma}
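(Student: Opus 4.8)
The plan is to exploit the minimal representation $\mathcal{A}=\mathcal{B}/C$ with $\mathcal{B}\subseteq\mathcal{P}=\prod_{i=1}^n\mathcal{S}_i$ to produce, for a fixed index $i$, an ideal of $\mathcal{A}$ that is similar to the monolith $\mathrm{M}(\mathcal{S}_i)\triangleleft\mathcal{S}_i$. Fix $i$, and recall from \Cref{minrep}(i) that $\mathcal{S}_i$ is critical, hence monolithic, so $\mathrm{M}(\mathcal{S}_i)\ne 0$. The first step is to pull $\mathrm{M}(\mathcal{S}_i)$ back into $\mathcal{B}$: by \Cref{minrep}(iv), $D_i:=\mathrm{M}(\mathcal{S}_i)\cap\mathcal{B}$ is a nonzero ideal of $\mathcal{B}$, and it is contained in $\mathcal{B}\cap\mathcal{S}_i$ (viewing $\mathcal{S}_i$ as the $i$-th coordinate subalgebra of $\mathcal{P}$). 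By \Cref{minrep}(v), $C\cap\mathcal{S}_i=0$, so $C\cap D_i=0$, and therefore the image $M_i:=(D_i+C)/C\cong D_i$ is a nonzero ideal of $\mathcal{A}=\mathcal{B}/C$.

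The second step is to verify the similarity chain
$$
(M_i\triangleleft\mathcal{A})\sim(D_i\triangleleft\mathcal{B})\sim(D_i\triangleleft\mathcal{S}_i)\sim(\mathrm{M}(\mathcal{S}_i)\triangleleft\mathcal{S}_i),
$$
and then argue $M_i$ is minimal. For the first link, $C\cap D_i=0$ means the quotient map $\mathcal{B}\to\mathcal{A}$ restricts to an isomorphism $D_i\to M_i$ of modules over the respective annihilator quotients — this is the standard observation that quotienting by an ideal disjoint from $D_i$ does not change the induced module structure on $D_i$, and it gives similarity by a short direct check. For the middle link, I would apply \Cref{lem1}: grouping the coordinates, write $\mathcal{B}\subseteq\mathcal{S}'\times\mathcal{S}_i$ where $\mathcal{S}'=\prod_{j\ne i}\mathcal{S}_j$; by \Cref{minrep}(ii) $\mathcal{B}$ is subdirect, hence projects onto $\mathcal{S}_i$, and $D_i\subseteq\mathcal{B}\cap\mathcal{S}_i$ is an ideal of $\mathcal{B}$, so \Cref{lem1} yields $(D_i\triangleleft\mathcal{B})\sim(D_i\triangleleft\mathcal{S}_i)$. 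The third link is trivial once we know $D_i=\mathrm{M}(\mathcal{S}_i)$ as ideals of $\mathcal{S}_i$.

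So the heart of the matter, and the step I expect to be the main obstacle, is showing $D_i=\mathrm{M}(\mathcal{S}_i)$ — equivalently, that the nonzero ideal $D_i=\mathrm{M}(\mathcal{S}_i)\cap\mathcal{B}$ of $\mathcal{B}$ actually equals all of $\mathrm{M}(\mathcal{S}_i)$, and moreover that $M_i$ is a \emph{minimal} ideal of $\mathcal{A}$. The key point is \Cref{minrep}(iii): a $\mathbf{k}$-submodule $\mathcal{E}\subseteq\mathcal{S}_i$ is an ideal of $\mathcal{S}_i$ if and only if $\mathcal{B}\mathcal{E}\subseteq\mathcal{E}$, i.e.\ if and only if $\mathcal{E}$ is an ideal of $\mathcal{B}$ (for submodules of $\mathcal{S}_i$). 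Combined with the fact that $\mathrm{M}(\mathcal{S}_i)$ is the minimal nonzero ideal of $\mathcal{S}_i$, this forces any nonzero $\mathcal{B}$-ideal inside $\mathcal{S}_i$ to contain $\mathrm{M}(\mathcal{S}_i)$; applied to $D_i\subseteq\mathrm{M}(\mathcal{S}_i)$ this gives $D_i=\mathrm{M}(\mathcal{S}_i)$. For minimality of $M_i\triangleleft\mathcal{A}$: any nonzero ideal of $\mathcal{A}$ contained in $M_i$ pulls back to a $\mathcal{B}$-ideal $E$ with $0\ne E/(E\cap C)$ and $E\subseteq D_i+C$; intersecting with $\mathcal{S}_i$ and using (v) to control the $C$-part, one shows $E\cap\mathcal{S}_i$ is a nonzero $\mathcal{B}$-ideal inside $\mathrm{M}(\mathcal{S}_i)$, hence equals $\mathrm{M}(\mathcal{S}_i)$ by (iii), so the original ideal is all of $M_i$. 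I would write out this last bookkeeping carefully, as it is where the disjointness conditions (iv) and (v) interact most delicately with the module structure.
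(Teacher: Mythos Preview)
Your proposal is correct and follows essentially the same route as the paper: define $M_i$ as the image of $\mathrm{M}(\mathcal{S}_i)$ in $\mathcal{B}/C$, use \Cref{minrep}(iii)--(v) to see that $\mathrm{M}(\mathcal{S}_i)\subseteq\mathcal{B}$ and that $M_i$ is a nonzero minimal ideal, and then invoke \Cref{lem1} for the similarity. The paper's own proof is terse to the point of omitting exactly the steps you flag as ``the heart of the matter'' --- it simply asserts that $M_i:=\mathrm{M}(\mathcal{S}_i)C/C$ is minimal and that \Cref{lem1} gives the similarity directly --- so your explicit verification that $D_i=\mathrm{M}(\mathcal{S}_i)$ via (iii) and your insertion of the extra link $(M_i\triangleleft\mathcal{A})\sim(D_i\triangleleft\mathcal{B})$ to pass from $\mathcal{B}$ to $\mathcal{B}/C$ are genuine improvements in rigor over what is written there, not deviations from it.
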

\begin{proof}
From the construction of the minimal representation, we see that $M_i:=\mathrm{M}(\mathcal{S}_i)C/C$ is a minimal ideal of $\mathcal{B}/C\cong\mathcal{A}$, isomorphic to $\mathrm{M}(\mathcal{S}_i)$. In addition, \Cref{lem1} gives $(\mathrm{M}(\mathcal{S}_i)\triangleleft\mathcal{S}_i)\sim(M_i\triangleleft\mathcal{B}/C)$.
\end{proof}

In particular, if an algebra is monolithic:
\begin{cor}\label{lem3}
Let $\mathcal{A}=\mathcal{B}/C$, where $B\subseteq\mathcal{P}=\prod_{i=1}^n\mathcal{S}_i$ is a minimal representation of $\mathcal{A}$, and assume that $\mathcal{A}$ is monolithic. Then, $(\mathrm{M}(\mathcal{A})\triangleleft\mathcal{A})\sim(\mathrm{M}(\mathcal{S}_i)\triangleleft\mathcal{S}_i)$, for each $i=1,\ldots,n$.\qed% In addition,
% $$
% \mathcal{A}/\mathrm{M}(\mathcal{A})\in\mathrm{var}\{\mathcal{S}_1/\mathrm{M}(\mathcal{S}_1),\ldots,\mathcal{S}_n/\mathrm{M}(\mathcal{S}_n)\}.
% $$
\end{cor}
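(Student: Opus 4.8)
The plan is to obtain this immediately from \Cref{lem2}, once one observes that in a monolithic algebra every minimal ideal must coincide with the monolith. So the argument will be very short, which is presumably why the statement is recorded as a corollary.

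First I would apply \Cref{lem2} to the given minimal representation $\mathcal{A} = \mathcal{B}/C$ with $\mathcal{B} \subseteq \mathcal{P} = \prod_{i=1}^{n} \mathcal{S}_i$: for each $i \in \{1,\ldots,n\}$ this produces a minimal ideal $M_i \subseteq \mathcal{A}$ together with a similarity $(M_i \triangleleft \mathcal{A}) \sim (\mathrm{M}(\mathcal{S}_i) \triangleleft \mathcal{S}_i)$. It therefore suffices to identify $M_i$ with $\mathrm{M}(\mathcal{A})$ for every $i$.

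Next, I would invoke the hypothesis that $\mathcal{A}$ is monolithic. By definition $\mathrm{M}(\mathcal{A})$ is the intersection of all nonzero ideals of $\mathcal{A}$, and monolithicity says $\mathrm{M}(\mathcal{A}) \ne 0$; in particular $\mathrm{M}(\mathcal{A})$ is a nonzero ideal contained in the nonzero ideal $M_i$. Since $M_i$ is minimal among the nonzero ideals of $\mathcal{A}$, this containment forces $M_i = \mathrm{M}(\mathcal{A})$. Substituting into the similarity coming from \Cref{lem2} yields $(\mathrm{M}(\mathcal{A}) \triangleleft \mathcal{A}) \sim (\mathrm{M}(\mathcal{S}_i) \triangleleft \mathcal{S}_i)$ for each $i$, which is exactly the assertion.

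I do not expect any genuine obstacle here. The only point requiring a little care is the reading of ``minimal ideal'': it must be understood as minimal among the nonzero ideals of $\mathcal{A}$ (rather than as an ideal admitting no proper nonzero subideals of its own), so that the inclusion $0 \ne \mathrm{M}(\mathcal{A}) \subseteq M_i$ genuinely produces an equality. Granting \Cref{lem2}, nothing further is needed.
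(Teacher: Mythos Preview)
Your proposal is correct and is precisely the intended argument: the paper records this as an immediate corollary of \Cref{lem2} (note the \qed attached to the statement with no proof given), and your observation that in a monolithic algebra every minimal ideal coincides with $\mathrm{M}(\mathcal{A})$ is exactly what bridges \Cref{lem2} to the conclusion.
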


\begin{Thm}\label{similarmonoliths}
If two critical algebras $\mathcal{A}$ and $\mathcal{B}$ generate the same variety, then their monoliths are similar.
\end{Thm}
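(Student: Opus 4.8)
The plan is to run the minimal‑representation machinery of \Cref{minrep} and \Cref{lem3} for $\mathcal{A}$ inside $\mathrm{var}(\mathcal{B})$, and then to use criticality of $\mathcal{B}$ \emph{itself} to recognize $\mathcal{B}$ among the critical factors that occur. Throughout, algebras are finite; recall that every critical algebra is monolithic, and put $\mathscr{V} := \mathrm{var}(\mathcal{A}) = \mathrm{var}(\mathcal{B})$.

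First, since $\mathcal{A} \in \mathscr{V} = \mathrm{var}(\mathcal{B})$ and $\mathscr{V}$ is locally finite, I would fix a minimal representation
\[
\mathcal{A} = \mathcal{D}/C, \qquad \mathcal{D} \subseteq \mathcal{P} = \prod_{i=1}^{n} \mathcal{S}_i .
\]
By the final clause of \Cref{minrep} (applied with $\mathcal{A}' = \mathcal{B}$), each $\mathcal{S}_i$ is a critical section of $\mathcal{B}$. As $\mathcal{A}$ is critical it is monolithic, so \Cref{lem3} yields
\[
(\mathrm{M}(\mathcal{A}) \triangleleft \mathcal{A}) \sim (\mathrm{M}(\mathcal{S}_i) \triangleleft \mathcal{S}_i), \qquad i = 1, \dots, n .
\]

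The key step --- and the first point at which one uses that $\mathcal{B}$ is critical, and that $\mathrm{var}(\mathcal{A}) = \mathrm{var}(\mathcal{B})$ rather than merely $\mathcal{A} \in \mathrm{var}(\mathcal{B})$ --- is to show that $\mathcal{S}_{i_0} \cong \mathcal{B}$ for at least one index $i_0$. I would argue by contradiction: suppose every $\mathcal{S}_i$ is a \emph{proper} section of $\mathcal{B}$. Since a variety is closed under products, subalgebras and quotients, the representation $\mathcal{A} = \mathcal{D}/C$ with $\mathcal{D} \subseteq \prod_i \mathcal{S}_i$ gives $\mathcal{A} \in \mathrm{var}(\mathcal{S}_1, \dots, \mathcal{S}_n)$, whence $\mathcal{B} \in \mathscr{V} = \mathrm{var}(\mathcal{A}) \subseteq \mathrm{var}(\mathcal{S}_1, \dots, \mathcal{S}_n)$; thus $\mathcal{B}$ would lie in the variety generated by its proper sections, contradicting the criticality of $\mathcal{B}$. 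Hence some $\mathcal{S}_{i_0}$ is a section of $\mathcal{B}$ that is not proper, and since $\mathcal{B}$ is finite this forces $\mathcal{S}_{i_0} \cong \mathcal{B}$.

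To conclude, note that similarity is an equivalence relation and is visibly preserved under isomorphism --- one transports the isomorphisms $\alpha$ and $\mu$ furnishing a similarity along the ambient isomorphism --- so $\mathcal{S}_{i_0} \cong \mathcal{B}$ gives $(\mathrm{M}(\mathcal{S}_{i_0}) \triangleleft \mathcal{S}_{i_0}) \sim (\mathrm{M}(\mathcal{B}) \triangleleft \mathcal{B})$. Combining this with the displayed similarity at $i = i_0$ and transitivity of $\sim$ yields $(\mathrm{M}(\mathcal{A}) \triangleleft \mathcal{A}) \sim (\mathrm{M}(\mathcal{B}) \triangleleft \mathcal{B})$, which is the assertion. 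The main obstacle is the contradiction step above: one must be careful that passing from $\mathcal{A}$ to its minimal representation does not leave the variety generated by the factors $\mathcal{S}_i$, and that a non‑proper section of a finite algebra is genuinely an isomorphic copy of it --- both of which rely on the finiteness built into the notion of a critical algebra here.
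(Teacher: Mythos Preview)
Your argument is correct and is essentially the paper's own proof, just with the details spelled out: the paper compresses your contradiction step into the single clause ``where $\mathcal{S}_i=\mathcal{B}$ for some $i$'', and then invokes \Cref{lem3} exactly as you do. Your use of a fresh letter $\mathcal{D}$ for the subalgebra of $\prod_i\mathcal{S}_i$ is in fact cleaner than the paper's overloading of~$\mathcal{B}$.
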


\begin{proof}
Since $\mathcal{A}\in \mathrm{var}(\mathcal{B})$ and $\mathcal{B}$ is critical there exists a minimal representation 
\begin{equation}
\mathcal{A} = \mathcal{B}/C, \quad \mathcal{B} \subseteq \mathcal{P} = \prod_{i=1}^n \mathcal{S}_i,
\end{equation}
where $S_i=B$ for some $i$. Since $\mathcal{A}$ is monolithic the result follows from Corollary \ref{lem3}.
\end{proof}

As a consequence we obtain a slightly improved version of Corollary \ref{main_prime}.

\begin{cor}
If two critical algebras $\mathcal{A}$ and $\mathcal{B}$ generate the same variety and one of them is prime then $\mathcal{A}\cong \mathcal{B}$.
\end{cor}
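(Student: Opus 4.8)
The plan is to reduce the statement to \Cref{main_prime} by showing that, if one of $\mathcal{A}$, $\mathcal{B}$ is prime, then so is the other. We may assume both algebras are finite (this is the setting in which \Cref{prime_monolith} and \Cref{main_prime} apply) and, relabelling if necessary, that $\mathcal{A}$ is prime. Since $\mathcal{A}$ and $\mathcal{B}$ are critical and $\mathrm{var}(\mathcal{A})=\mathrm{var}(\mathcal{B})$, \Cref{similarmonoliths} gives
\[
(\mathrm{M}(\mathcal{A})\triangleleft\mathcal{A})\sim(\mathrm{M}(\mathcal{B})\triangleleft\mathcal{B}),
\]
i.e.\ algebra isomorphisms $\mu\colon\mathrm{M}(\mathcal{A})\to\mathrm{M}(\mathcal{B})$ and $\alpha\colon\mathcal{A}/\mathrm{Ann}_{\mathcal{A}}(\mathrm{M}(\mathcal{A}))\to\mathcal{B}/\mathrm{Ann}_{\mathcal{B}}(\mathrm{M}(\mathcal{B}))$ that are compatible with the module structures on $\mathrm{M}(\mathcal{A})$ and $\mathrm{M}(\mathcal{B})$. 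Because $\mathcal{A}$ is prime, $\mathrm{Ann}_{\mathcal{A}}(\mathrm{M}(\mathcal{A}))=0$, so $\alpha$ is an isomorphism $\mathcal{A}\to\mathcal{B}/\mathrm{Ann}_{\mathcal{B}}(\mathrm{M}(\mathcal{B}))$; and by \Cref{prime_monolith}, $\mathrm{M}(\mathcal{A})^2\ne0$.

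Next I would record the elementary remark that, for a minimal ideal $I$ of a finite algebra, $I^2\ne0$ if and only if $I\not\subseteq\mathrm{Ann}(I)$, equivalently if and only if the image of $I$ in the quotient by $\mathrm{Ann}(I)$ is nonzero (if $I^2$ is the zero ideal it is contained in $\mathrm{Ann}(I)$; otherwise $I^2=I$ by minimality). Thus the task becomes to show that $\alpha$ carries the image of $\mathrm{M}(\mathcal{A})$ in $\mathcal{A}/\mathrm{Ann}_{\mathcal{A}}(\mathrm{M}(\mathcal{A}))$ onto the image of $\mathrm{M}(\mathcal{B})$ in $\mathcal{B}/\mathrm{Ann}_{\mathcal{B}}(\mathrm{M}(\mathcal{B}))$. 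This is where one uses \emph{both} halves of similarity at once: evaluating $\mu\bigl(\omega(u_1,\dots,u_n)\bigr)$ for $u_1,\dots,u_n\in\mathrm{M}(\mathcal{A})$ in two ways — once using that $\mu$ is an algebra homomorphism, and once viewing $\omega(u_1,\dots,u_n)$ as the module action of the scalars $\bar u_2,\dots,\bar u_n$ on $u_1$ and applying compatibility with $\alpha$ — shows that, for each $v\in\mathrm{M}(\mathcal{A})$, the scalars $\alpha(\bar v)$ and $\overline{\mu(v)}$ of $\mathcal{B}/\mathrm{Ann}_{\mathcal{B}}(\mathrm{M}(\mathcal{B}))$ act in the same way on $\mathrm{M}(\mathcal{B})$ (immediately when $\Omega$ has a binary operation, and in general by an induction on arity). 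Faithfulness of $\mathrm{M}(\mathcal{B})$ as a $\mathcal{B}/\mathrm{Ann}_{\mathcal{B}}(\mathrm{M}(\mathcal{B}))$-module — which is how that quotient is built — then forces $\alpha(\bar v)=\overline{\mu(v)}$, so $\alpha$ identifies the two images. Hence the image of $\mathrm{M}(\mathcal{B})$ in $\mathcal{B}/\mathrm{Ann}_{\mathcal{B}}(\mathrm{M}(\mathcal{B}))$ is nonzero, i.e.\ $\mathrm{M}(\mathcal{B})^2\ne0$, and \Cref{prime_monolith} shows $\mathcal{B}$ is prime.

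With both $\mathcal{A}$ and $\mathcal{B}$ finite, prime, over the same $\mathbf{k}$ and satisfying $\mathrm{Id}(\mathcal{A})=\mathrm{Id}(\mathcal{B})$ (they generate the same variety), \Cref{main_prime} yields $\mathcal{A}\cong\mathcal{B}$. Alternatively, once $\mathrm{M}(\mathcal{B})^2\ne0$ one sees directly that $\mathrm{Ann}_{\mathcal{B}}(\mathrm{M}(\mathcal{B}))=0$ — otherwise it would contain the monolith $\mathrm{M}(\mathcal{B})$ and then $\mathrm{M}(\mathcal{B})^2\subseteq\mathrm{M}(\mathcal{B})\cdot\mathrm{Ann}_{\mathcal{B}}(\mathrm{M}(\mathcal{B}))=0$ — so that $\alpha\colon\mathcal{A}\to\mathcal{B}$ is already the desired isomorphism.

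I expect the transport of the condition $\mathrm{M}(-)^2\ne0$ across the similarity to be the delicate point. Similarity, as defined, only asserts that $\mathrm{M}(\mathcal{A})$ and $\mathrm{M}(\mathcal{B})$ are isomorphic as algebras and as modules over the respective quotients, and not, a priori, that these isomorphisms are compatible with how the monoliths embed into those quotients; indeed the naive version of such compatibility is false when one monolith squares to zero, so one genuinely needs the two structures to overdetermine $\alpha$ on the image of the monolith, as sketched above. This is also the place where the standing hypothesis that $\Omega$ contains an operation of arity at least $2$ is used. If a cleaner route is preferred, I would try to isolate and prove once and for all a lemma asserting that similarity of minimal ideals preserves the property of being a non-null ideal.
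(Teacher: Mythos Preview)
Your overall plan coincides with the paper's: deduce from \Cref{similarmonoliths} and \Cref{prime_monolith} that both algebras are prime, then invoke \Cref{main_prime}. You are also right that the passage from $\mathrm{M}(\mathcal{A})^2\ne0$ to $\mathrm{M}(\mathcal{B})^2\ne0$ via similarity is the substantive step, which the paper leaves implicit.

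However, your detailed justification of that step is not airtight. The two-ways computation of $\mu(\omega(u_1,\dots,u_n))$ shows, for a single operation $\omega$, that $\omega(\mu(u_1),\mu(u_2),\dots)=\omega(\mu(u_1),\alpha(\bar u_2),\dots)$; but to conclude $\alpha(\bar v)=\overline{\mu(v)}$ you must show that their difference lies in $\mathrm{Ann}_{\mathcal B}(\mathrm{M}(\mathcal B))$, i.e.\ that it kills $\mathrm{M}(\mathcal B)$ through \emph{every} multilinear $f(y_1,y_2,x_1,\dots,x_n)$ with arbitrary extra entries from $\mathcal B$. When one tries to verify this by tracking both module structures through such an $f$, one is led to require $\alpha(\bar u)=\overline{\mu(u)}$ for the auxiliary elements $u$ as well, so the argument threatens to become circular; the announced ``induction on arity'' does not obviously break this loop. (Your own closing paragraph essentially flags this worry.)

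There is a much shorter route that sidesteps the difficulty entirely and uses the criticality of $\mathcal B$ directly. Since $\mathcal A$ is prime, $\mathrm{Ann}_{\mathcal A}(\mathrm{M}(\mathcal A))=0$, so the isomorphism $\alpha$ supplied by similarity is already an isomorphism
\[
\alpha:\ \mathcal A\ \xrightarrow{\ \sim\ }\ \mathcal B/\mathrm{Ann}_{\mathcal B}(\mathrm{M}(\mathcal B)).
\]
The right-hand side is a quotient of $\mathcal B$ that generates $\mathrm{var}(\mathcal A)=\mathrm{var}(\mathcal B)$; because $\mathcal B$ is critical, this quotient cannot be proper, hence $\mathrm{Ann}_{\mathcal B}(\mathrm{M}(\mathcal B))=0$. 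Then $\mathcal B$ is prime by \Cref{prime_monolith} (equivalently, $\alpha$ is already the desired isomorphism $\mathcal A\cong\mathcal B$), and \Cref{main_prime} concludes exactly as in the paper.
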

\begin{proof}
It follows from Theorem \ref{similarmonoliths} and Lemma \ref{prime_monolith} that $\mathcal{A}$ and $\mathcal{B}$ are both prime algebras. The result now follows directly from Corollary \ref{main_prime}.
\end{proof}

\section{Examples}
We present some interesting examples and counter-examples.

\begin{example}\textbf{(Relatively free algebra of a prime algebra is not prime.)}
In contrast with the case where the base ring is an infinite field (see, e.g., \cite[Chapter 12]{DF}), the relatively free algebra 
of the variety generated by a prime algebra need not be prime. 
For example, let \(\mathcal{A} = \mathbf{k}\) be a finite field. 
Then \(\mathcal{A}\) is simple, and in particular, prime. 
The free algebra in \(\mathrm{var}(\mathcal{A})\) of finite rank is commutative and associative. 
Hence, if it were prime, it would be a domain. 
However, finite commutative domains are fields. 
Thus, the relatively free algebra would have to be a field. 
Since \(x^{|\mathcal{A}|}-x\) is a polynomial identity of the relatively free algebra, 
it cannot be an extension field of \(\mathcal{A}\), yielding a contradiction. 
Therefore, the relatively free algebra of \(\mathrm{var}(\mathcal{A})\) of finite rank is not prime. 
%In contrast with the case where the base ring is an infinite field, the relatively free algebra 
%of a prime algebra need not be prime. 
%For example, let \(\mathcal{A} = \mathbb{F}\) be a finite field. 
%Then \(\mathcal{A}\) is simple, and in particular, prime. 
%The free algebra in \(\mathrm{var}(\mathcal{A})\) of finite rank is commutative and associative. 
%Hence, if it were prime, it would be a domain. 
%However, finite commutative domains are fields. 
%Thus, the relatively free algebra would have to be a field. 
%Since \(x^{|\mathcal{A}|}-x\) is a polynomial identity of the relatively free algebra, 
%it cannot be an extension field of \(\mathcal{A}\), yielding a contradiction. 
%Therefore, the relatively free algebra of \(\mathrm{var}(\mathcal{A})\) of finite rank is not prime.
\end{example}

\begin{example}\textbf{(Critical not prime algebra.)}
Not every critical algebra is prime. 
For example, let \(\mathbf{k}\) be a finite field of characteristic \(2\) and $\mathcal{L}=\mathfrak{sl}_2(\mathbf{k})$. Note that $\mathcal{L}^2=[\mathcal{L},\mathcal{L}]\ne0$ and $[\mathcal{L},\mathcal{L}^2]=0$. On the other hand, every proper section $\mathcal{H}$ satisfies $\mathcal{H}^2=0$. Hence, $\mathcal{L}$ is critical. Clearly it is not prime.
\end{example}

\begin{example} \textbf{(Monolithic non-critical algebras.)}
Let $\mathbf{k}$ be a field and  $\mathcal{L}=\textnormal{Span}\{x,y,z\}$, be the Heisenberg algebra, i.e., the nonzero products are $xy=-yx=z$. Then, $\mathcal{L}$ is a nilpotent Lie algebra and $\mathfrak{z}(\mathcal{L})=\textrm{Ann}_\mathcal{L}(\mathcal{L})=\textnormal{Span}\{z\}$. Moreover, $\mathcal{L}$ is monolithic.
%, in fact: let $I$ be a nonzero ideal of $\mathcal{L}$ and consider a nonzero element $w\in I$. Then, $w=\alpha_1 x +\alpha_2 y +\alpha_3 y$, where $\alpha_i\neq 0$ for some $i\in\{1,2,3\}$.
% \begin{itemize}
%     \item If $\alpha_1\neq 0$, consider $wy=\alpha_1 z$, then $z\in I$.
%     \item If $\alpha_1=0$ e $\alpha_2\neq 0$,  $xw=\alpha_2 z\neq 0$, then $z\in I$
%     \item If $\alpha_1=\alpha_2=0$ e $\alpha_3\neq  0$, it follows that $w=\alpha_3z$, hence $z\in I$.
% \end{itemize}
% Thus, $\textrm{Ann}_{\mathcal{L}}(\mathcal{L})\subseteq I$. Therefore, $\displaystyle\bigcap_{I\trianglelefteq \mathcal{L}} I\neq 0$, then $\mathcal{L}$ is monolithic.\\
% \noindent
Now, consider the algebra $\mathcal{L}\times\mathcal{L}$ and notice that $K=\{(a,-a)\mid\ a\in\textnormal{Ann}_{\mathcal{L}}(\mathcal{L})\}$ is an ideal of $\mathcal{L}\times\mathcal{L}$. Then, $\mathcal{L}'=\mathcal{L}\times\mathcal{L}/K$ is a Lie algebra, we shall show that this algebra is monolithic. In fact: Let $J'$ be a nonzero ideal of $\mathcal{L}'$, then $J'=J/K$, where $J\trianglelefteq \mathcal{L}\times\mathcal{L}$ and $K\subseteq J$. If there is $(a,b)\in J$ with $a\notin\mathrm{M}(\mathcal{L})$, then, as we have seen before, $\mathcal{M}(\mathcal{L})\times\{0\}\subseteq[\mathcal{L}\times\mathcal{L},(a,b)]\subseteq J$. In particular, $\mathrm{M}(\mathcal{L})\times\mathrm{M}(\mathcal{L})/K\subseteq J'$. Then $\mathcal{L}'$ is monolithic.  Moreover, observe that $\mathcal{L}$ is a subalgebra of $\mathcal{L}'$, therefore
\[\textnormal{Id}(\mathcal{L}\times \mathcal{L})\subseteq \textnormal{Id}(\mathcal{L}')\subseteq \textnormal{Id}(\mathcal{L}),\]
it follows that $\mathcal{L}'$ is not critical.

Note that a similar construction works well if we find a nilpotent $\Omega$-algebra, with at least one non-trivial $n$-ary operation with $n\ge2$, $\mathcal{A}$ with $1$-dimensional annihilator.
\end{example}

\begin{example}\textbf{(Critical and non-isomorphic algebras, generating the same variety.)}
Let $\textbf{k}$ be a field such that \(-1\) is not a square in \(\mathbf{k}\). 
Let \(\mathcal{A}_1=\mathrm{Span}\{x,r,t\}\), where the nonzero products are 
\(r^2=rt=-tr=x\), and let \(\mathcal{A}_2=\mathrm{Span}\{x,y,z\}\), where the nonzero 
products are \(y^2=z^2=yz=-zy=x\). 
It is not difficult to prove that
\[
\mathrm{Id}(\mathcal{A}_1)=\mathrm{Id}(\mathcal{A}_2)
=\langle (x_1x_2)x_3,\; x_1(x_2x_3)\rangle_T.
\]
In addition, each proper section of \(\mathcal{A}_k\) (for \(k\in\{1,2\}\)) satisfies 
the identity \([x,y]=0\). 
Thus both algebras are critical.

Assume now that there exists an algebra isomorphism 
\(\psi:\mathcal{A}_1\to\mathcal{A}_2\). 
Since \(\psi(\mathcal{A}_1^2)=\mathcal{A}_2^2\), we obtain 
\(\psi(x)=\lambda x\) for some \(\lambda\in \textbf{k}\setminus\{0\}\). 
Write
\[
\psi(t)=\alpha x+\beta y+\gamma z.
\]
Since \(\psi\) is injective, at least one of \(\beta,\gamma\) is nonzero. 
Furthermore, from 
\[
0=\psi(t^2)
   =(\alpha x+\beta y+\gamma z)^2
   =(\beta^2+\gamma^2)x,
\]
we obtain the relation \(\beta^2+\gamma^2=0\). Hence $\beta=\gamma=0$, due to our assumption on the base field. 
Thus, although 
\(\mathrm{Id}(\mathcal{A}_1)=\mathrm{Id}(\mathcal{A}_2)\) and both algebras are critical, 
we nevertheless have \(\mathcal{A}_1\not\cong\mathcal{A}_2\).

Note that these algebras can be represented as follows:
\[
\mathcal{A}_1
=\mathrm{Span}\bigl\{
  (e_1+e_2)e_{12}+e_1e_{23},\;
  e_2e_{12}+e_2e_{23},\;
  -e_1e_2e_{13}
 \bigr\}
 \subseteq \mathrm{UT}_3(\textbf{k}),
\]
and
\[
\mathcal{A}_2
=\mathrm{Span}\bigl\{
  (e_1+e_2)e_{12}+e_1e_{23},\;
  (e_1-e_2)e_{12}-e_2e_{23},\;
  -e_1e_2e_{13}
 \bigr\}
 \subseteq \mathrm{UT}_3(\textbf{k}).
\]
This example is based on the analogous constructions for finite groups: the dihedral group of order $8$ and the quaternion group of order $8$, see the comments after Corollary 53.33 in \cite{Hanna}.
\end{example}

\section{Further questions}

It is interesting to study what happens when \(\mathcal{A} \in\mathrm{var}(\mathcal{A}')\) under certain conditions on the algebras \(\mathcal{A}\) and \(\mathcal{A}'\). 
This paper provides the answer when both algebras are finite and \(\mathcal{A}\) is prime. 
In particular, we raise the following question for algebraically closed fields:

\medskip
(1) If \(\mathcal{A}\) is a finite-dimensional prime \(\Omega\)-algebra over an algebraically closed field \(\mathbf{k}\) and \(\mathcal{A} \in \mathrm{var}(\mathcal{A}')\), is it true that \(\mathcal{A}\) is a section of \(\mathcal{A}'\)?

\medskip
It is also intriguing that polynomial identities can uniquely determine an algebra in certain classes (for example, upper triangular matrices endowed with a group grading). 
We ask whether this situation could be derived from our results; this would be the case if there exists a suitable signature for a given algebra that turns it into a prime algebra while preserving the polynomial identities in an appropriate sense.

\medskip
(2) Let \(\Omega \subseteq \Omega'\), and assume that \(\Omega\) contains at least one \(n\)-ary operation with \(n \ge 2\). 
Let \(\mathcal{A}\) be an \(\Omega'\)-algebra. 
If \(\mathcal{A}\) is critical as an \(\Omega\)-algebra, is it true that \(\mathcal{A}\) is critical as an \(\Omega'\)-algebra?

The answer is yes. Indeed, if \(\mathrm{Id}_{\Omega'}(\mathcal{A}) = \mathrm{Id}_{\Omega'}(\mathcal{B})\), then \(\mathrm{Id}_\Omega(\mathcal{A}) = \mathrm{Id}_\Omega(\mathcal{B})\). 
Thus, if \(\mathcal{A}\) were not critical as an \(\Omega'\)-algebra, it would belong to the variety generated by some of its sections. 
Each section is also an \(\Omega\)-algebra, so \(\mathcal{A}\) would belong to the variety of \(\Omega\)-algebras generated by them, which is a contradiction.

\medskip
(3) Let \(\mathcal{A}\) and \(\mathcal{B}\) be \(\Omega'\)-algebras, and assume that \((\mathcal{A},\Omega) \cong (\mathcal{B},\Omega)\) and that they are critical as \(\Omega\)-algebras. 
If \(\mathrm{Id}_{\Omega'}(\mathcal{A}) = \mathrm{Id}_{\Omega'}(\mathcal{B})\), is it true that \(\mathcal{A} \cong \mathcal{B}\) as \(\Omega'\)-algebras?

\medskip
($3'$) The answer to the question in (3) is ``no'' if we do not assume that \(\mathcal{A}\) is critical as an \(\Omega\)-algebra. 
For instance, let \(\mathcal{B} = \mathcal{A} = \mathrm{M}_2 \oplus \mathrm{M}_2 \oplus \mathrm{M}_2\). 
Let \(\Gamma\) and \(\Gamma'\) be non-isomorphic \(G\)-gradings on \(\mathrm{M}_2\), and set 
\[
(\mathcal{A}, \Omega_G) = (\mathrm{M}_2, \Gamma) \oplus (\mathrm{M}_2, \Gamma) \oplus (\mathrm{M}_2, \Gamma'), \quad
(\mathcal{B}, \Omega_G) = (\mathrm{M}_2, \Gamma) \oplus (\mathrm{M}_2, \Gamma') \oplus (\mathrm{M}_2, \Gamma').
\]
Then \((\mathcal{A}, \Omega_G) \not\cong (\mathcal{B}, \Omega_G)\), but \(\mathrm{Id}_{\Omega_G}(\mathcal{A}) = \mathrm{Id}_{\Omega_G}(\mathcal{B})\).

\medskip
(4) Let \(\Omega = \Omega' \cap \Omega''\). Let \(\mathcal{A}\) and \(\mathcal{B}\) be \(\Omega'\)-algebras such that \(\mathrm{Id}_{\Omega'}(\mathcal{A}) = \mathrm{Id}_{\Omega'}(\mathcal{B})\) and their structures as \(\Omega''\)-algebras are isomorphic. 
Is it true that \(\mathrm{Id}_{\Omega' \cup \Omega''}(\mathcal{A}) = \mathrm{Id}_{\Omega' \cup \Omega''}(\mathcal{B})\) if we impose additional conditions on \(\mathcal{A}\) and \(\mathcal{B}\)? 
For instance, one may assume that \(\mathcal{A}\) and \(\mathcal{B}\) are critical as \(\Omega\)-algebras.

\medskip
($4'$) A potential application of the above construction is when there exists \(\Omega''\) such that \(\mathcal{A}\) (and \(\mathcal{B}\)) is simple as an \(\Omega''\)-algebra. 
In this case, one could recover an isomorphism \(\mathcal{A} \to \mathcal{B}\) of \(\Omega'\)-algebras under suitable conditions (finite-dimensionality and either an algebraically closed or finite base field).

\medskip
($4''$) The construction in (3') also provides a counterexample for (4) if we do not assume that \(\mathcal{A}\) is critical. 
For example, let \(\Omega'' = \{m, \mathrm{tr}\}\), where \(m\) is the usual product and \(\mathrm{tr}\) is the trace induced from \(\mathrm{M}_6(\mathbb{\mathbf{k}})\). 
Then \((\mathcal{A}, \Omega'')\) is simple, but it is not possible to have 
\(\mathrm{Id}_{\Omega' \cup \Omega''}(\mathcal{A}) = \mathrm{Id}_{\Omega' \cup \Omega''}(\mathcal{B})\), 
since that would imply \((\mathcal{A}, \Omega') \cong (\mathcal{B}, \Omega')\).

\section*{Acknowledgements}
We sincerely thank Professor A.~Ol'shanski\u{\i} for his valuable discussions and suggestions.

\end{document}